\newtheorem{theorem}{Theorem}
\newtheorem{proposition}[theorem]{Proposition}
\newtheorem{lemma}[theorem]{Lemma}
\newtheorem{corollary}[theorem]{Corollary}
\theoremstyle{definition}
\newtheorem{remark}{Remark}[section]
\newcommand{\cref}[1]{Corollary~\ref{c.#1}}
\numberwithin{equation}{section}
\numberwithin{theorem}{section}
\newcommand{\Z}{\mathbb{Z}}
\newcommand{\R}{\mathbb{R}}
\newcommand{\bT}{\mathbb{T}}
\newcommand{\cS}{\mathcal{S}}
\newcommand{\cD}{\mathcal{D}}
\newcommand{\cQ}{\mathcal{Q}}
\newcommand{\cK}{\mathcal{K}}
\newcommand{\cR}{\mathcal{R}}
\newcommand{\Oint}{\Omega^\eps_{\rm int}}
\newcommand{\Oext}{\Omega^\eps_{\rm ext}}
\renewcommand{\bar}{\overline}
\renewcommand{\tilde}{\widetilde}
\renewcommand{\div}{\mathrm{div}}
\newcommand{\ol}{\overline}
\newcommand{\eps}{\varepsilon}
\newcommand{\fp}{\mathfrak{p}}
\newcommand{\ran}{\mathrm{ran}}
\title{Convergence rate for the homogenization of 
diffusions in dilutely perforated domains with reflecting boundaries}
\author{Wenjia Jing}
\address{Yau Mathematical Sciences Center, Tsinghua University, Beijing 100084 and Yanqi Lake Beijing Institute of Mathematical Sciences and Applications, Beijing 101407, P.R. China}
\email{wjjing@tsinghua.edu.cn}
\date{\today}
\begin{document}

\begin{abstract}

We revisit the homogenization problem for the Poisson equation in periodically perforated domains with zero Neumann data at the boundary of the holes and prescribed Dirichlet data at the outer boundary. It is known that, if the periodicity of the holes goes to zero but their volume fraction remains fixed and positive, the limit problem is a Dirichlet boundary value problem posed in the domain without the holes, and the effective diffusion coefficients are non-trivially modified; if that volume fraction goes to zero instead, i.e.\,the holes are dilute, the effective operator remains the Laplacian (that is, unmodified). Our main results contain the study of a ``continuity" in those effective models with respect to the volume fraction of the holes and some new convergence rates for homogenization in the dilute setting. Our method explores the classical two-scale expansion ansatz and relies on asymptotic analysis of the rescaled cell problems using layer potential theory. 

\smallskip

\noindent{\bf Key words}: periodic homogenization, perforated domain, periodic layer potentials, Neumann boundary value problems, dilute suspensions.

\smallskip

\noindent{\bf Mathematics subject classification (MSC 2010)}: 35B27, 35J08

\end{abstract}

\maketitle


\section{Introduction}

In this paper, we revisit and further explore the homogenization theory for the following mixed boundary value  problem in periodically perforated domains:
\begin{equation}
\label{eq:hetnp}
\left\{
\begin{aligned}
&-\Delta u^\eps(x) = f(x), &\quad &x\in \Omega^{\eps,\eta},\\
&N_x\cdot \nabla u^\eps(x) = 0, &\quad &x\in \partial \Omega^{\eps,\eta}_{\rm int},\\
&u^\eps(x) = g(x), &\quad &x \in \partial \Omega^{\eps,\eta}_{\rm ext}.
\end{aligned}
	\right.
\end{equation}
Here, $\Omega^{\eps,\eta}$ is a highly oscillatory perforated domain which is obtained by removing a periodic array of holes from an open bounded domain $\Omega \subseteq \R^d$, $d\ge 2$. The small parameter $\eps$ denotes the periodicity of holes, $\eta = \eta_\eps$ denotes the ratio with respect to $\eps$ of the length scale of the holes; the volume fraction of the holes is hence of order $O(\eta^d)$. The boundary of $\Omega^{\eps,\eta}$ has two parts: an interior part $\partial \Omega^{\eps,\eta}_{\rm int}$ consisting boundaries of the holes inside $\Omega$, and an external part $\partial \Omega^{\eps,\eta}_{\rm ext}$; $N_x$ is the normal vector along the inner boundary at $x\in \partial \Omega^{\eps,\eta}$ pointing to the outside of the holes. See Section \ref{sec:geosetup} for a detailed geometric set-up.

The above problem models the steady state of heat conduction in a piece of homogeneous and isotropic material occupying a part of $\Omega$ that is outside an array of isolated heat barriers (the holes); $f$ models heat sources, the Dirichlet data $g$ is the prescribed heat distribution at the outer boundary, and the Neumann boundary condition says the holes are heat insulators. From a probabilistic view point, $u^\eps$ is given by the Feymann-Kac formula
\begin{equation*}
u^\eps(x) = \mathbf{E}^x \left\{g(X_{\tau^\eps}) - \frac12\int_0^{\tau^\eps} f(X_t)dt\right\}, \qquad x\in \Omega^{\eps,\eta}.
\end{equation*}
Here $(X_t)_{t\ge 0}$ is a diffusion process in $\R^d$ that evolves like the standard Brownian motion unless when it hits the boundary of a (interior) hole where it gets reflected (with respect to the outer normal of the boundary). $\mathbf{E}^x$ stands for the average over processes that start from $x \in \Omega^\eps$. Hence, $u^\eps(x)$ is the averaged value of the sum of a running cost due to $f$ and a termination cost due to $g$ in the life-span $[0,\tau^\eps)$ of $X$, where $\tau^\eps$ is the exit time when $X$ first hits $\partial \Omega^{\eps,\eta}_{\rm ext}$.   

The problem \eqref{eq:hetnp} is posed in a highly oscillatory domain, and is an example of PDEs with highly heterogeneous data. Such problems are common models for applied physics and engineering, such as in material science, atmospheric science, reservoir engineering, etc. The Laplacian in singularly perturbed domains was studied as early as by \cite{MR377303} in the context of resolvent analysis and scattering theory. Brownian motions in domains with small absorbing holes, which corresponds to zero Dirichlet data at $\partial \Oint$, was studied in \cite{MR609184} using probabilistic tools. Periodic homogenization of elliptic problems in perforated domains dates back at least to \cite{MR548785}; the case of zero Dirichlet data on the holes attracted particular attention because, as shown in \cite{CioMur-1,Kacimi_Murat,Allaire91-1,Allaire91-2}, there is a \emph{critical} scale for the size of the holes compared with the periodicity of the holes. At this scale the collective effect of the holes starts to appear in the macroscopic model; below the critical scale the holes play no role and above the critical scale the effective model is more dramatically modified. When Neumann data are prescribed at the boundary of the holes, the picture is very different. Conca and Donato showed in \cite{MR974289} that only for \emph{non-zero} (periodic) Neumann data there is a critical scale and it is much larger compared with the aforementioned Dirichlet case. For zero Neumann data like in \eqref{eq:hetnp}, the effective operator is either modified from the Laplacian if the hole size is comparable with the periodicity (see, e.g.\,\cite{AllMur}),  or the effective model is unperturbed if the holes are dilute. 

In this paper, we focus on the case with zero Neumann data at the boundary of the holes, and investigate the continuous transition of the effective models with respect to the relative smallness of the holes, and we establish quantitative convergence results for homogenization when the holes are dilute. The proofs are based on a careful study of the rescaled cell-problems using \emph{layer potential} techniques, a method started in \cite{MR4075336,MR4172687,JLP-stokes} for the case of Dirichlet data in the holes, and exhibit the effectiveness of this approach. Before we focus on the problem \eqref{eq:hetnp}, let us mention that homogenization in perforated domains and in similar geometric settings has gained a great deal of attention recently with new convergence rates and uniform regularity \cite{shen2020sharp,shen2021compactness,shen2021homogenization,MR3659366,MR4240768,KLS13_Neumann} following the framework of \cite{AL87_Lp,AL91_Lp,KLS14_GN,KLS12_ARMA}, derivations in the non-periodic settings \cite{MR4020526,MR4290385,DGV-dilute,MR4280836}, derivation of higher order models \cite{MR4259909,feppon:hal-02518528,feppon:hal-03098222} and so on.
\subsection{Geometric set-up of the perforated domain}\label{sec:geosetup} The model set (of holes) is denoted by $T$, and it is an open subset of the unit cube $Q_1 := (-\frac12,\frac12)^d$. For each fixed $\eta \in (0,1]$, we define the perforated cube in the unit scale by $Y_{\rm{f},\eta} := Q_1\setminus \eta \ol T$, where the subscript ``f'' stands for ``fluid'' part; this is a convention frequently used for flows in porous media. We assume $\ol T \subset B_{\frac13}(0)$ so that $T$ is separated from $\partial Q_1$ and $Y_{{\rm f},\eta}$ is connected even when $\eta=1$. We take copies of $Y_{{\rm f},\eta}$ and glue them together to form a $1$-periodic perforated space $\R^d_{\rm{f},\eta}$, defined by
\begin{equation*}
 \R^d_{\rm{f},\eta} := \bigcup_{k\in \Z^d} (k+Y_{\rm{f},\eta}\cup \partial Q) = \R^d \setminus \bigcup_{k\in \Z^d} (k+\eta\ol T).
\end{equation*} 
For each $\eps \in (0,1]$, we rescale $\R^d_{\rm{f},\eta}$ by $\eps$ to obtain the $\eps$-periodic perforated space $\eps \R^d_{\rm{f},\eta}$; equivalently, this is obtained by removing from $\R^d$ the sets $T^{\eps,\eta}_k := \eps k + \eps\eta T$, $k\in \Z^d$. Note that $\eps\eta$ is the length scale of the removed holes, and $\eps$ is the typical distance between neighboring sets of holes. Finally, given an open bounded set $\Omega \subset \R^d$ and for each fixed $\eps$ and $\eta$, we define the bounded perforated domain $\Omega^{\eps,\eta}$ by $\Omega \,\cap\, (\eps \R^d_{\rm{f},\eta})$ with some modifications near $\partial \Omega$. Denote the interior holes completely contained in $\Omega$ by
\begin{equation*}
\bigcup_{k \in J_{\rm int}} \{\eps k + \eps\eta T\}, \; \text{with } \; J_{\rm int} := \{k \in \Z^d \,:\, \eps(k+ B_{2\eta}) \subset \Omega\}. 
\end{equation*}
Denote the boundary holes that are too close to $\partial \Omega$ (they may even intersect $\partial \Omega$) by
\begin{equation*}
\bigcup_{k \in J_{\rm bdr}} \{(\eps k + \eps\eta T)\cap \Omega\}, \; \text{with } \; J_{\rm bdr} := \{k\in \Z^d\,:\,\eps(k+B_{2\eta})\cap \partial \Omega \ne \emptyset\}.
\end{equation*}
The boundary of $\Omega^{\eps,\eta}$ hence contains two parts: the interior boundary
\begin{equation*}
\partial \Omega^{\eps,\eta}_{\rm int} : = \bigcup_{k\in J_{\rm int}} (\eps k + \eps \eta\partial T),
\end{equation*}
and the exterior boundary
\begin{equation*}
\partial \Omega^{\eps,\eta}_{\rm ext} = \partial \Omega^{\eps,\eta}\setminus \partial \Oint = (\partial \Omega \cap \eps\R^{d}_{{\rm f},\eta}) \cup (\bigcup_{k\in J_{\rm bdr}} \Omega\cap (\eps k + \eps\eta\partial T))
\end{equation*}
Concerning the smoothness of $\Omega^{\eps,\eta}$, we assume $\partial \Omega$ and $\partial T$ are of class $C^{2,\alpha}$ for some $\alpha \in (0,1)$. Even with this assumption, the cutting of $\Omega$ and $\eps \R^d_{{\rm f},\eta}$ may result in irregular boundaries near $\partial \Omega$. To avoid such pathological cases we modify those holes too close to $\partial \Omega$ by enlarging them so that the resulting $\partial \Omega^{\eps,\eta}_{\rm ext}$ remains Lipschitz. One way to do this is as follows: let $k \in J_{\rm bdr}$ be the index of such a boundary cell, and the neighborhood $B_{\eta\eps}(\eps k)$ of $T^{\eps,\eta}_k$ intersects $\partial \Omega$. Let $z$ denote the point in $\partial \Omega$ that is closest to $\eps k$ and, moreover, $\eps k = z + t N_z$ for some $t \in \R$. We enlarge the hole $T^{\eps,\eta}_k \cap \Omega$ by removing the segments
\begin{equation*}
\{x + r \,N_z \,:\, r> 0, x \in B_{\eps\eta}(\eps k) \cap \Omega\}.
\end{equation*}
The enlarged boundary holes are still denoted by $T^{\eps,\eta}_k$, $k\in J_{\rm bdr}$. As it will be clear later, the precise modification plays no important role. 

For notational simplicity, in most of the paper $\eta$ is omitted from the super- and subscriptions in $\Omega^{\eps,\eta}$, $T^{\eps,\eta}$, $Y_{\rm{f},\eta},\R^d_{\rm{f},\eta}$, $\eps\R^d_{\rm{f},\eta}$, $u^{\eps,\eta}$, etc.

We always assume that $\Omega^\eps$ is set up as above and, moreover, the following assumptions hold. In the rest of the paper, those conditions altogether are referred to as  the \emph{geometric set-up} (A):
\begin{enumerate}
  \item[(A1)] 
$T$ is the union of several disjoint star-shaped open sets, $\ol T\subset B_{1/3}$, $\partial T$ is of class $C^{2,\alpha}$. 
\item[(A2)]  $\Omega$ is simply connected, and $\partial \Omega$ is of class $C^{2,\alpha}$.
  \item[(A3)] Boundary holes are modified when necessary so that $\partial \Omega^\eps$ is Lipschitz (not uniform in $\eps,\eta$).
  \end{enumerate}
We remark that the assumption that $\Omega$ is simply connected and the precise bound of $T$ are not crucial and can be relaxed. Note also that by assumption $Y_{\rm f}$, $\R^d_{\rm f}$ and hence $\Omega^\eps$ are connected. 

\subsection{Background and the main results}

As seen from the geometric set-up, the parameter $\eta$ (which may depend on $\eps$) is the relative smallness of the holes with respect to the periodicity $\eps$. When $\eta$ is a fixed parameter in $(0,1)$ as $\eps$ goes to zero, we are in the classical setting and the effective model can be derived from a formal two-scale expansion. Plug in the ansatz
\begin{equation*}
u^\eps(x) = u_0(x,y) + \eps u_1(x,y) + \eps^2 u_2(x,y) + \dots, \quad  \text{with} \quad y = \tfrac{x}{\eps}
\end{equation*}
to \eqref{eq:hetnp}, apply the chain rule and replace the gradient $\nabla$ by $\nabla_x + \frac{1}{\eps} \nabla_y$. We get
\begin{equation*}
\left(\tfrac{1}{\eps^2} \mathcal{L}_2 + \tfrac{1}{\eps} \mathcal{L}_1 + \mathcal{L}_0\right) \left(u_0(x,y) + \eps u_1(x,y) + \eps^2 u_2(x,y) + \dots \right)  = f,
\end{equation*}
where the differential operators $\mathcal{L}_i$, $i=0,1,2$, are defined by
\begin{equation*}
\mathcal{L}_2 = -\Delta_y, \quad \mathcal{L}_1 = -2\nabla_y \cdot \nabla_x, \quad \mathcal{L}_0 = -\Delta_x.
\end{equation*}
Assume that each $u_i$ is defined on $\Omega\times (\bT^d\setminus \eta \ol T)$ where $\bT^d$ stands for the unit torus (i.e.\,periodic cell), in particular they are periodic in the $y$ variable. Making terms of the same order in $\eps$ equal, we obtain a hierarchy of equations in $y$ with boundary conditions that the $u_j$'s must satisfy. The leading order equations yield $u_0(x,y) = u_0(x)$ and the next order leads to
\begin{equation*}
u_1(x,y) = \sum_{k=1}^d \chi_k(y) \frac{\partial u_0}{\partial x_k}(x) + u_{1,0}(x)
\end{equation*}
where $u_{1,0}(x)$ is a constant function in $y$, and, for each $k=1,\dots,d$, the function $\chi_k = \chi_{k,\eta}$ is the unique (up to an additive constant) solution to the \emph{cell problem}
\begin{equation}
\label{eq:cellp}
\left\{
\begin{aligned}
&-\Delta \chi_{k,\eta}(y) = 0, \qquad &y \in \bT^d\setminus \eta\ol T,\\
&N_y\cdot(\nabla \chi_{k,\eta} + e_k)(y) = 0, \qquad &y \in \eta\partial T.
\end{aligned}
\right.
\end{equation}
Here, $e_k$'s  are the unit coordinate vectors and $N$ denotes the outer normal vector along $\partial (\eta T)$. The next order equations in the hierarchy produce the problem for $u_2$:
\begin{equation*}
\left\{
\begin{aligned}
&-\Delta_y u_2(x,y) = f(x) + \Delta_x u_0(x) + 2\partial_{y_\ell} \chi_k(y) \partial^2_{x_\ell x_k} u_0(x), &\quad &y \in \bT^d\setminus \ol T,\\
&N_y \cdot \nabla_y u_2(x,y) = -N_y^\ell \partial^2_{x_\ell x_k} u_0(x) \chi_k(y), &\quad &y \in \eta\partial T.
\end{aligned}
\right.
\end{equation*}
As always the summation convention is assumed. By a compatibility condition the integrals of the right hand sides, in $\bT^d\setminus \eta\ol T$ and on $\eta\partial T$ respectively, must be equal. This leads to
\begin{equation*}
(1-|\eta T|) f = - \left(\int_{\bT^d\setminus \eta\ol T} \delta_{\ell k} + \partial_\ell \chi_k(y)dy \right)\partial^2_{x_\ell x_k} u_0(x).
\end{equation*}
After some rewriting, we conclude that
\begin{equation}
\label{eq:hometa}
\left\{
\begin{aligned}
&-\nabla \left(\ol A(\eta) \nabla \ol u^\eta(x)\right) = f(x), &\quad &x\in \Omega,\\
&\ol u^\eta(x) = g(x), &\quad &x\in \partial \Omega.
\end{aligned}
\right.
\end{equation}
where the homogenized coefficients $\ol A(\eta) = (\ol a_{ij}^\eta)$ are \emph{constants} defined by
\begin{equation}
\label{eq:aijeta}
\ol a_{ij}^\eta = \fint_{\bT^d \setminus \eta\ol T} \delta_{ij} + \partial_i \chi_{j,\eta}(y) dy.
\end{equation}
Throughout the paper, $\fint$ stands for averaged integrals. The matrix $\ol A(\eta)$ is known to remain symmetric and elliptic. Rigorous results showing that $u^\eps$ converging in proper sense to $\ol u$ were proved e.g.\,in \cite{MR773850,MR1195131,AllMur} even in more general settings.

The dilute setting, i.e.\,when $\eta_\eps \to 0$ together with $\eps$, was studied by Conca and Donato in \cite{MR974289}. They showed that, with periodic Neumann data $h$ prescribed at the inner boundary, there is a critical scale $\eta_{{\rm cr},1}(\eps)$ for $\eta$ above which the energy $\|\nabla u^\eps\|^2_{L^2}$ blows up as $\eps \to 0$ if $h$ has non-zero mean over the boundary of the holes. This scale guarantees that $\lim_{\eps \to 0} \eps \eta^{-(d-1)}_\eps$ is in $(0,\infty)$. Note that $\eta_{{\rm cr},1} \sim \eps^{\frac{1}{d-1}}$ for $d\ge 2$, and at this scale the total surface volume of $\partial \Oint$ remains finite as $\eps \to 0$. It is natural to compare $\eta_{{\rm cr},1}$ with the critical scale $\eta_{{\rm cr},2}$ that occurs in homogenization of perforated domains with zero Dirichlet data set in the holes. The latter is determined so that $\sigma_\eps$ has limit in $(0,1)$, where 
\begin{equation}
\label{eq:sigepsdef}
\sigma_\eps = \begin{cases} \eps \eta^{-\frac{d-2}{2}}, \quad & d\ge 3,\\
\eps|\log \eta|^{\frac12}, \quad &d=2.
\end{cases}
\end{equation}
$\sigma_\eps$ is the bounding constant of a version of Poincar\'e inequality for $H^1$ functions in $B_\eps$ with zero value in $B_{\eta\eps}$; see e.g.\,\cite{Allaire91-2,MR4075336}. Note that $\eta_{{\rm cr},2} \sim \eps^{\frac{2}{d-2}}$ for $d\ge 3$ and $\eta_{{\rm cr},2} \sim \exp(-\frac{C}{\eps^2})$ for $d=2$. Clearly, $\eta_{{\rm cr},1}$ is of an order much larger than $\eta_{{\rm cr},2}$. 

For zero Neumann data, i.e.\,$h=0$ as in the second line of \eqref{eq:hetnp}, the dilute setting is very simple and the homogenized equation is
\begin{equation}
\label{eq:homnp}
\left\{
\begin{aligned}
&-\Delta u(x) = f(x), &\quad &x \in \Omega,\\
&u(x) = g(x), &\quad &x \in \partial \Omega.
\end{aligned}
\right.
\end{equation}
In other words, the effective conductivity matrix $\ol A(0)$ is simply the identity matrix $I$. Moreover, even without using correctors, it is easy to verify that $u^\eps$ converges \emph{strongly} in $H^1$ to $u$; see \cite{MR974289}. As far as we know, convergence rates for homogenization in the \emph{dilute} setting, even for zero Neumann data, have not been studied before. We also remark that cell problems \eqref{eq:cellp} was not used in \cite{MR974289}. A main contribution of this paper is to establish such results by analyzing the two-scale expansion and the cell problems. 

Our first main result concerns the continuous transition of effective models from \eqref{eq:hometa} to \eqref{eq:homnp} with respect to $\eta$ (understood as the limit of $\eta_\eps$). Let $\mathbb{M}^d$ denote the set of $d\times d$ symmetric and positive definite real matrices. Define $\ol A : [0,1) \to \mathbb{M}^d$ by \eqref{eq:aijeta} for $\eta \in (0,1)$ and by $I$ for $\eta = 0$.

\begin{theorem}\label{thm:Aeta} Under the geometric set-up conditions {\upshape(A)}, the mapping $\eta \mapsto \ol A(\eta)$ defined above is continuous, and moreover, the following expansion holds:
\begin{equation}
\label{eq:abarijexpan1}
\ol a_{ij}^\eta = \delta_{ij} - \eta^d \int_{\R^d \setminus \ol T} \nabla w^0_i \cdot \nabla w^0_j + O(\eta^{2d}), \qquad \text{as } \eta \to 0.
\end{equation}
In particular, there exists a constant $C>0$ independent of $\eta$ so that for sufficiently small $\eta$, we have $|\ol A(\eta) - I | \le C\eta^d$. Here, for each $k \in \{1,\dots,d\}$, $w^0_k$ is the unique solution to the exterior Neumann problem
\begin{equation}
\label{eq:extnp}
\left\{
\begin{aligned}
&-\Delta w_k(z) = 0, &\qquad &z\in \R^d \setminus \ol T,\\
&\frac{\partial w_k}{\partial N}(z) = -N_z, &\qquad &z \in \partial T,\\
&|w_k(z)| \to 0, &\qquad &|z| \to \infty,
\end{aligned}
\right.
\end{equation}
\end{theorem}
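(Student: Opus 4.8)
The plan is to analyze the cell problem \eqref{eq:cellp} via the rescaling $y = \eta z$, which turns it into a problem on the dilated perforated torus $\frac{1}{\eta}\bT^d \setminus \ol T$. Setting $\chi_{k,\eta}(y) = \eta\, v_{k,\eta}(y/\eta)$, the function $v_{k,\eta}$ solves $-\Delta v_{k,\eta} = 0$ in $\frac{1}{\eta}\bT^d \setminus \ol T$ with $\partial_N v_{k,\eta} = -N_z \cdot e_k$ on $\partial T$ and $\frac{1}{\eta}$-periodicity. As $\eta \to 0$ the domain exhausts $\R^d \setminus \ol T$, so one expects $v_{k,\eta} \to w^0_k$, the solution of the exterior Neumann problem \eqref{eq:extnp}. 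The key quantitative input — which I would invoke from the layer-potential machinery developed earlier in the paper (the rescaled cell problems are solved via periodic single-layer potentials, whose kernels differ from the whole-space Newtonian kernel by smooth corrections of size $O(\eta^{d-2})$ in $d\ge 3$, resp.\ $O(\eta^d \log)$-type in $d=2$, after accounting for the mean-zero normalization) — is an estimate of the form $\|v_{k,\eta} - w^0_k\|$ (in a suitable norm near $\partial T$, e.g.\ $H^{1/2}(\partial T)$ or $H^1$ of a fixed neighborhood) $= O(\eta^d)$. The decay $|w^0_k(z)| = O(|z|^{1-d})$ and $|\nabla w^0_k(z)| = O(|z|^{-d})$ from the exterior Neumann problem (standard, since $w^0_k$ has a single-layer representation with mean-zero density, so no monopole term) controls the discrepancy between integrating over the truncated cell versus all of $\R^d \setminus \ol T$.

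Next I would rewrite the homogenized coefficient. Starting from \eqref{eq:aijeta}, an integration by parts using the equation and the boundary condition in \eqref{eq:cellp} gives the standard symmetric energy formula
\begin{equation*}
\ol a_{ij}^\eta = \fint_{\bT^d\setminus\eta\ol T} (e_i + \nabla\chi_{i,\eta})\cdot(e_j + \nabla\chi_{j,\eta})\,dy + (\text{a correction from }|\eta T|),
\end{equation*}
so that, after subtracting $\delta_{ij}$,
\begin{equation*}
\ol a_{ij}^\eta - \delta_{ij} = -\fint_{\bT^d\setminus\eta\ol T} \nabla\chi_{i,\eta}\cdot\nabla\chi_{j,\eta}\,dy + (\text{lower order}),
\end{equation*}
up to the bookkeeping of the hole volume $|\eta T| = \eta^d|T|$ which itself only contributes at order $\eta^d$ times bounded quantities (and, crucially, for the leading term the $\eta^d$ from volume fraction matches the scaling we want). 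Passing to the $z$ variable: $\nabla_y\chi_{k,\eta}(y) = (\nabla v_{k,\eta})(y/\eta)$ and $dy = \eta^d dz$, so
\begin{equation*}
\fint_{\bT^d\setminus\eta\ol T} \nabla\chi_{i,\eta}\cdot\nabla\chi_{j,\eta}\,dy = \eta^d \int_{\frac{1}{\eta}\bT^d\setminus\ol T} \nabla v_{i,\eta}\cdot\nabla v_{j,\eta}\,dz \cdot (1 + o(1)).
\end{equation*}
Then I replace $v_{k,\eta}$ by $w^0_k$ and the truncated domain by $\R^d\setminus\ol T$, estimating both errors: the domain-truncation error is controlled by the decay of $\nabla w^0_k$ (giving $O(\eta^{d}\cdot\eta^{d}) = O(\eta^{2d})$ relative, hence an $O(\eta^{2d})$ absolute contribution), and the $v_{k,\eta}\to w^0_k$ error is controlled by the layer-potential estimate $\|v_{k,\eta}-w^0_k\| = O(\eta^{d})$ together with the same decay (again $O(\eta^{2d})$). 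This yields \eqref{eq:abarijexpan1}, and the bound $|\ol A(\eta) - I|\le C\eta^d$ is immediate. Continuity of $\eta\mapsto\ol A(\eta)$ on $(0,1)$ follows from continuous dependence of $\chi_{k,\eta}$ on $\eta$ (the domain varies smoothly, holes stay in $B_{1/3}$), and continuity at $\eta = 0$ is exactly the $O(\eta^d)$ estimate just proved.

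The main obstacle is the uniform-in-$\eta$ analysis of the rescaled cell problem on the growing domain $\frac{1}{\eta}\bT^d\setminus\ol T$: one must show $v_{k,\eta}$ is well-approximated by $w^0_k$ with an explicit rate, uniformly as the domain degenerates to all of $\R^d\setminus\ol T$. This is where the periodic layer potential theory does the work — representing $v_{k,\eta}$ via the single-layer potential for the periodic (Bloch/lattice) Green's function on the torus of size $\frac1\eta$, comparing that Green's function to the Newtonian kernel (their difference, after the mean-zero projection forced by the Neumann compatibility condition $\int_{\partial T} N_z\cdot e_k = 0$, decays like $\eta^{d}$ in the relevant operator norm rather than merely $\eta^{d-2}$), and inverting the resulting boundary integral operator uniformly in $\eta$ since it is a compact perturbation of the $\eta=0$ operator (whose invertibility on mean-zero data is classical). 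The mean-zero structure is essential both for the well-posedness of \eqref{eq:extnp} (no slowly-decaying monopole) and for upgrading the naive $O(\eta^{d-2})$ kernel error to the $O(\eta^{d})$ needed for \eqref{eq:abarijexpan1}; handling the $d=2$ case (where the free kernel is logarithmic) requires the usual separate but parallel argument.
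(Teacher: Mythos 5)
Your proposal follows essentially the same route as the paper: rescale the cell problem to $\tfrac1\eta\bT^d\setminus\ol T$, expand the rescaled corrector via periodic single-layer potentials (where the mean-zero density and the absence of a linear term in the regular part $R$ of the periodic Green's function upgrade the naive $O(\eta^{d-2})$ kernel error to $O(\eta^{d})$, exactly as in Propositions~\ref{prop:KpsNS}--\ref{prop:Rexpan} and Lemma~\ref{lem:cprcp}), and identify the limit with the exterior Neumann solution $w^0_k$. The one soft spot is your final error accounting: a direct Cauchy--Schwarz of the cross term $\int_{\frac1\eta\bT^d\setminus\ol T}\nabla(v_{i,\eta}-w^0_i)\cdot\nabla v_{j,\eta}$ against a global $L^2$ gradient bound only yields $O(\eta^{d/2})$ (hence $O(\eta^{3d/2})$ after the $\eta^d$ prefactor), so to reach $O(\eta^{2d})$ one should, as the paper does, first integrate by parts to write $\ol a^{\eta}_{ij}-\delta_{ij} = -\tfrac{\eta^d}{1-\eta^d|T|}\int_{\partial T} N^i\,{\tilde\chi}^{\,\eta}_j$ and then insert the $C(\partial T)$ expansion of ${\tilde\chi}^{\,\eta}_j$ directly.
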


\begin{remark} 
The existence and uniqueness of $w^0_k$ is classical and a formula using single-layer potential operator is given in Section \ref{sec:rcellp}. We may view \eqref{eq:abarijexpan1} as an expansion of the effective diffusion in terms of the hole volume fraction $\gamma := \eta^d|T|$, as in \cite{MR2129229,DGV-dilute}; then it is a first order formula although the second order could also be read from the proofs in Section \ref{sec:rcellp}. Due to the definite sign of the second term, we see the effective diffusion is \emph{slowed} due to the collective effects of holes.
\end{remark}

The proof of the above theorem is in Section \ref{sec:Aexpan} and is based on explicit representation and detailed analysis in Section \ref{sec:rcellp} of the cell problems \eqref{eq:rcellp} rescaled from \eqref{eq:cellp}. Expansions of this type were previous obtained in \cite{MR2129229,MR2244590,MR3025042} for effective conductivity or elasticity coefficients of periodic two-phase materials. The basic tool is the periodic layer potential theory and we supply some key results in Section \ref{sec:lp}. To see what the above theorem implies for solutions of PDEs, for each fixed $\eps$ and $\eta$ (which depends on $\eps$), let $u^{\eps,\eta}$ be the solution to \eqref{eq:hetnp}. Treat $\eta$ as fixed and let $\ol A(\eta)$ be defined by \eqref{eq:aijeta}. Let $\ol u^\eta$ and $u$ be the solutions to \eqref{eq:hometa} and \eqref{eq:homnp} respectively. We can quantify the convergence of $\ol u^\eta$ to $u$, which shows the continuous transition of the effective models as the holes become dilute. 

\begin{corollary}\label{coro:uetau} Assume that the geometric set-up {\upshape(A)} holds. Then there is a constant $C> 0$ independent of $\eps$ and $\eta$, such that for any $\eta > 0$ sufficiently small, for any $f\in L^2(\Omega)$ and $g\in H^1(\Omega)$, we have
\begin{equation}
\label{eq:uetauH1}
\|\nabla (\ol u^\eta - u)\|_{L^2(\Omega)} \le C\eta^d(\|f\|_{L^2} + \|g\|_{H^1}).
\end{equation}
If we assume further $f\in C^\alpha(\ol \Omega)$ and $g\in C^{2,\alpha}(\ol \Omega)$, then
\begin{equation}
\label{eq:uetauC2}
\|\ol u^\eta -u\|_{C^{2,\alpha}(\ol\Omega)} \le C\eta^d(\|f\|_{C^\alpha} + \|g\|_{C^{2,\alpha}}).
\end{equation}
\end{corollary}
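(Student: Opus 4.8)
The plan is to deduce Corollary~\ref{coro:uetau} from Theorem~\ref{thm:Aeta} by a standard continuous-dependence estimate for linear elliptic equations, treating $\ol A(\eta)$ as a small constant-coefficient perturbation of the identity. First I would subtract the two PDEs: since $\ol u^\eta$ solves $-\nabla\cdot(\ol A(\eta)\nabla \ol u^\eta)=f$ with $\ol u^\eta=g$ on $\partial\Omega$, and $u$ solves $-\Delta u=f$ with $u=g$ on $\partial\Omega$, the difference $v:=\ol u^\eta-u$ satisfies
\begin{equation*}
-\nabla\cdot(\ol A(\eta)\nabla v) = \nabla\cdot\big((\ol A(\eta)-I)\nabla u\big) \quad\text{in }\Omega,\qquad v=0 \text{ on }\partial\Omega.
\end{equation*}
The right-hand side is in divergence form with a source $F:=(\ol A(\eta)-I)\nabla u\in L^2(\Omega)$, and by Theorem~\ref{thm:Aeta} we have $|\ol A(\eta)-I|\le C\eta^d$ for $\eta$ small, so $\|F\|_{L^2}\le C\eta^d\|\nabla u\|_{L^2}$.

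For the $H^1$ bound \eqref{eq:uetauH1}: testing the weak form against $v$ itself and using the uniform ellipticity of $\ol A(\eta)$ (which holds uniformly for small $\eta$ since $\ol A(\eta)\to I$), one gets $\lambda\|\nabla v\|_{L^2}^2 \le \int_\Omega F\cdot\nabla v \le \|F\|_{L^2}\|\nabla v\|_{L^2}$, hence $\|\nabla v\|_{L^2}\le \lambda^{-1}\|F\|_{L^2}\le C\eta^d\|\nabla u\|_{L^2}$. It then remains to bound $\|\nabla u\|_{L^2}$ in terms of the data: by standard $H^1$ theory for the Dirichlet problem \eqref{eq:homnp} on the $C^{2,\alpha}$ (hence Lipschitz) domain $\Omega$, $\|\nabla u\|_{L^2}\le C(\|f\|_{L^2}+\|g\|_{H^1})$ (reduce to zero boundary data by subtracting $g$, absorbing $\Delta g$ and $\nabla g$ into the right-hand side). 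Combining gives \eqref{eq:uetauH1}.

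For the Schauder bound \eqref{eq:uetauC2}: now use interior and boundary $C^{2,\alpha}$ Schauder estimates for the constant-coefficient operator $-\nabla\cdot(\ol A(\eta)\nabla\,\cdot\,)$ on $\Omega$, whose ellipticity constants are uniform in small $\eta$. Writing the equation for $v$ in nondivergence form $-\ol a_{ij}^\eta\partial^2_{ij}v = \ol a_{ij}^\eta\partial^2_{ij}u - \partial^2_{ij}u\cdot\delta_{ij}\cdot(\text{adjust}) $ — more cleanly, $-\ol a^\eta_{ij}\partial_{ij}v = (\ol a^\eta_{ij}-\delta_{ij})\partial_{ij}u =: G$ with $v=0$ on $\partial\Omega$ — one has $G\in C^\alpha(\ol\Omega)$ with $\|G\|_{C^\alpha}\le C\eta^d\|u\|_{C^{2,\alpha}}$ (the $C^\alpha$ norm of $u$'s second derivatives times the constant matrix $\ol A(\eta)-I$, whose entries are constants of size $O(\eta^d)$, so they don't inflate the Hölder seminorm). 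Boundary Schauder estimates then give $\|v\|_{C^{2,\alpha}(\ol\Omega)}\le C\|G\|_{C^\alpha}\le C\eta^d\|u\|_{C^{2,\alpha}}$, and finally the Schauder estimate for \eqref{eq:homnp} gives $\|u\|_{C^{2,\alpha}(\ol\Omega)}\le C(\|f\|_{C^\alpha}+\|g\|_{C^{2,\alpha}})$, yielding \eqref{eq:uetauC2}.

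The only genuine subtlety — and the step I would be most careful about — is ensuring that all the elliptic constants (coercivity constant $\lambda$ and Schauder constant $C$) are \emph{uniform in $\eta$} for $\eta$ small; this is immediate from Theorem~\ref{thm:Aeta} since $\ol A(\eta)\to I$, so for $\eta$ below some threshold $\ol A(\eta)$ has ellipticity ratio bounded by, say, $2$, and the Schauder constant depends only on that ratio, the dimension, $\alpha$, and $\Omega$. Everything else is routine textbook elliptic regularity (e.g.\,\cite{MR1814364}-type estimates), and I would not grind through it. One small bookkeeping point: to reduce to homogeneous boundary data in each estimate I subtract a suitable extension of $g$ (the identity $g$ itself works here since $g\in H^1(\Omega)$ resp.\ $C^{2,\alpha}(\ol\Omega)$), which is why the right-hand sides of \eqref{eq:uetauH1} and \eqref{eq:uetauC2} carry $\|g\|_{H^1}$ and $\|g\|_{C^{2,\alpha}}$ rather than just boundary-trace norms.
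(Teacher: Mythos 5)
Your proposal is correct and follows essentially the same route as the paper's own (outlined in Remark~\ref{rem:uetau}): test the difference equation for $v=\ol u^\eta-u$ against $v$ itself, use $|\ol A(\eta)-I|\le C\eta^d$ from Theorem~\ref{thm:Aeta} together with uniform ellipticity for small $\eta$ to get \eqref{eq:uetauH1}, and then apply standard Schauder estimates to the constant-coefficient problem with source $\nabla\cdot((\ol A(\eta)-I)\nabla u)$ to get \eqref{eq:uetauC2}. No discrepancies worth noting.
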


The above is a direct consequence of the quantitative behavior of $\ol A(\eta)$ in \eqref{eq:abarijexpan1} and the standard elliptic PDEs theory. We only briefly outline the proof in Remark \ref{rem:uetau} below.
Our next main result is the convergence rate for $u^\eps$ to $u$, as $\eps$ and $\eta$ go to zero. In view of Corollary \ref{coro:uetau}, we may quantify the difference between $u^{\eps}$ and $\ol u^\eta$ instead. We only treat the simplest setting for which $\ol u^\eta$ and $u$ are smooth solutions to \eqref{eq:hometa} and \eqref{eq:homnp}, to emphasize the role of $\eta$.

\begin{theorem}\label{thm:uetarate} Assume that the geometric set-up {\upshape(A)} holds. For each $k \in \{1,\dots,d\}$, let $\chi_{k,\eta}$ be solutions to the cell problems \eqref{eq:cellp}. Then there is a constant $C > 0$ such that, for all $\eta$ sufficiently small, for any $f\in C^\alpha(\Omega)$ and $g \in C^{2,\alpha}(\ol \Omega)$, the solution $u^{\eps}$ of \eqref{eq:hetnp} and $\ol u^\eta$ of \eqref{eq:hometa} satisfy
\begin{equation}
\label{eq:uetaH1}
\begin{aligned}
&\|u^{\eps} - \ol u^\eta - \eps \chi_{\ell,\eta}(\tfrac{x}{\eps})\partial_\ell \ol u^\eta\|_{H^1(\Omega^\eps)} \\
& \qquad \qquad  \qquad \le  C(\|f\|_{C^\alpha} + \|g\|_{C^{2,\alpha}})\times 
\begin{cases}
\eta^{d-1}, \;  &\text{if } \eta^{d-2}/\sigma^2_\eps \gg 1, \\
\sqrt{\eps}\eta^{\frac d2}, \; &\text{if } \eta^{d-2}/\sigma^2_\eps \lesssim 1, \, d\ge 3,\\
\sqrt{\eps}\eta|\log \eta|^{\frac{1}{2}}, \; &\text{if } \eta^{d-2}/\sigma^2_\eps \lesssim 1, \, d=2.
\end{cases}
\end{aligned}
\end{equation}
\end{theorem}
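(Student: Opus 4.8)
The plan is to run a quantitative two‑scale expansion directly in $\Omega^\eps$, carrying all the $\eta$‑dependence through the rescaled cell functions. I would take as approximation
$v^\eps(x) = \ol u^\eta(x) + \eps\,\theta_\eps(x)\,\chi_{\ell,\eta}(x/\eps)\,\partial_\ell \ol u^\eta(x) + b^\eps(x)$,
where $\theta_\eps$ is a smooth cut‑off equal to $0$ in an $O(\eps)$‑collar of $\partial\Omega$ (so that the interior corrector pollutes neither the Dirichlet datum on $\partial\Omega$ nor the boundary holes) and $1$ away from it, and $b^\eps$ is an auxiliary ``boundary‑hole corrector'' repairing the mismatch $g-\ol u^\eta$ on the boundary holes; since after enlargement the boundary holes lie within $O(\eps\eta)$ of $\partial\Omega$, that mismatch is $O(\eps\eta)$ and $b^\eps$ can be localized in $O(\eps\eta)$‑neighbourhoods of those holes. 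Because $\ol u^\eta\in C^{2,\alpha}$ only, I would also replace $\ol u^\eta$ inside the corrector by a mollification at scale $\rho\sim\eps$, a routine device that absorbs the third‑derivative terms appearing below.

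Next I would identify the residual of $z^\eps := u^\eps - v^\eps$. One checks: (i) on $\partial\Omega^\eps_{\rm ext}$, $z^\eps$ vanishes up to the negligible $O(\eps\eta^d)$ trace of the corrector on the ``good'' part of $\partial\Omega$ (itself corrected by a small harmless lifting); (ii) on $\partial\Omega^\eps_{\rm int}$, the cell‑problem condition $N\cdot(\nabla\chi_{\ell,\eta}+e_\ell)=0$ makes the $O(1)$ part of the conormal derivative of $v^\eps$ cancel exactly, leaving $\partial_\nu v^\eps=-\eps\,\chi_{\ell,\eta}(x/\eps)\,N\cdot\nabla\partial_\ell\ol u^\eta$, which is $O(\eps\eta)$ pointwise since $|\chi_{\ell,\eta}|=O(\eta)$ on $\eta\partial T$; (iii) in $\Omega^\eps$, using $\Delta\chi_{\ell,\eta}=0$, the interior residual $-\Delta v^\eps-f$ equals, modulo $O(\eps)$‑terms and $\theta_\eps,b^\eps$‑contributions, the divergence‑form part $\big(\ol a^\eta_{ij}-\delta_{ij}-\partial_i\chi_{j,\eta}(x/\eps)\big)\partial^2_{ij}\ol u^\eta$ plus an extra $\partial_i\chi_{j,\eta}(x/\eps)\,\partial^2_{ij}\ol u^\eta$ term. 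I would then test the weak form of the equation for $z^\eps$ against $z^\eps$ itself.

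The core of the argument is to control each term of $\|\nabla z^\eps\|^2_{L^2(\Omega^\eps)} = (\text{interior residual},z^\eps)+(\partial_\nu v^\eps,z^\eps)_{\partial\Omega^\eps_{\rm int}}+(\theta_\eps,b^\eps\text{-terms},z^\eps)$. For the divergence‑form part I would use the periodic flux corrector: extending the cell flux $e_j+\nabla\chi_{j,\eta}$ by zero into the holes gives a divergence‑free field on $\bT^d$ (its normal trace on $\eta\partial T$ vanishes by the cell condition) with mean $(1-|\eta T|)\ol A(\eta)$, hence a skew‑symmetric potential $B_\eta$ with $\partial_k B_{kij,\eta}=\delta_{ij}+\partial_i\chi_{j,\eta}-(1-|\eta T|)\ol a^\eta_{ij}$; combined with the cell bounds $\|\nabla\chi_{\ell,\eta}\|_{L^2(\text{cell})}\sim\eta^{d/2}$, $\|\chi_{\ell,\eta}\|_{L^\infty(\eta\partial T)}\sim\eta$, $\|B_\eta\|_{L^2(\text{cell})}\sim\eta^{d/2}$ coming from the analysis of \eqref{eq:rcellp} and \eqref{eq:extnp} in Section~\ref{sec:rcellp}, this makes the divergence‑form contribution $O(\eps\eta^{d/2})$. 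The surviving $\partial_i\chi_{j,\eta}(x/\eps)$‑term is written as $\eps\,\partial_{x_i}[\chi_{j,\eta}(x/\eps)]\,\partial^2_{ij}\ol u^\eta$ and integrated by parts into the bulk; its $O(\eps)$‑bulk remainder cancels against the $\eps\,\chi_{j,\eta}(x/\eps)\Delta\partial_j\ol u^\eta$ term via $\partial^3_{iij}\ol u^\eta=\Delta\partial_j\ol u^\eta$, while its boundary contribution on $\partial\Omega^\eps_{\rm int}$ cancels the conormal defect $(\partial_\nu v^\eps,z^\eps)_{\partial\Omega^\eps_{\rm int}}$ exactly — this double cancellation is precisely what prevents these terms from dominating. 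The remaining boundary terms — from $B_\eta$ on $\partial\Omega^\eps_{\rm int}$, and from $b^\eps$ near the boundary holes — are handled with scaled trace inequalities onto $\eta\partial T$, the uniform‑in‑$(\eps,\eta)$ Poincaré inequality and extension operator on $\Omega^\eps$, subtraction of cell‑averages from $z^\eps$, the identity $\ol a^\eta_{ij}\partial^2_{ij}\ol u^\eta=-f$ (to absorb stray $1-|\eta T|$ volume factors into $O(\eta^d)$ quantities), and — decisively — the Poincaré inequality with constant $\sigma_\eps$ of \eqref{eq:sigepsdef} near the Dirichlet boundary holes. It is this last ingredient that produces the case split in \eqref{eq:uetaH1}: when $\eta^{d-2}/\sigma_\eps^2\gg1$ the purely geometric bound $\eta^{d-1}$ is binding, whereas when $\eta^{d-2}/\sigma_\eps^2\lesssim1$ the $\sigma_\eps$‑Poincaré is the constraint and yields $\sqrt\eps\,\eta^{d/2}$ (the logarithm in $d=2$ being the $d=2$ form of $\sigma_\eps$), the two expressions matching at the threshold.

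The step I expect to be the main obstacle is the boundary analysis near $\partial\Omega$: after enlargement the boundary holes are genuine Dirichlet obstacles welded onto $\partial\Omega$, and one must simultaneously (a) prove that repairing the $O(\eps\eta)$ Dirichlet mismatch there costs at most $\sqrt\eps\,\eta^{d/2}$ in energy — where the $\sigma_\eps$‑Poincaré and the dichotomy enter — and (b) verify that the cut‑off $\theta_\eps$ does not reintroduce a larger error, which pins down the width of the collar and leans on the decay $|\chi_{\ell,\eta}(y)|\lesssim\eta^d$ away from the holes (itself read off the far‑field behaviour of the solution of \eqref{eq:extnp}). A secondary but pervasive difficulty is the bookkeeping of the $1-|\eta T|$ factors, since \eqref{eq:hometa} carries $f$ rather than $(1-|\eta T|)f$; I would systematically route these through $\ol a^\eta_{ij}\partial^2_{ij}\ol u^\eta=-f$, after which all such contributions are $O(\eta^d)$ and harmless.
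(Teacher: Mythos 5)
Your overall architecture (two--scale ansatz with the corrector $\eps\chi_{\ell,\eta}(x/\eps)\partial_\ell\ol u^\eta$, a skew-symmetric flux potential for the mean-zero field $\delta_{ij}+\partial_i\chi_{j,\eta}-(1-\eta^d|T|)\ol a^\eta_{ij}$, boundary-layer corrections near $\partial\Omega$, and an energy estimate for the discrepancy) is the same as the paper's. But you have misidentified the mechanism that produces the dichotomy in \eqref{eq:uetaH1}, and this is the heart of the theorem. You attribute the case split to ``the Poincar\'e inequality with constant $\sigma_\eps$ near the Dirichlet boundary holes.'' That inequality belongs to the Dirichlet-holes theory and plays no role here: the boundary holes welded to $\partial\Omega$ contribute only the $H^1$ cost of repairing the $O(\eps\eta)$ mismatch of $g-\ol u^\eta$ on $O(\eps^{-(d-1)})$ patches of size $O(\eps\eta)$, which is $O((\eps\eta)^{d/2})$ and is \emph{always} dominated by $\sqrt{\eps}\,\eta^{d/2}$. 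The quantity $\sigma_\eps$ appears in the statement only as a convenient reparametrization of the threshold $\eta^{d-2}$ versus $\eps$. The actual source of the dichotomy is the residual \emph{Neumann} datum left on the union of all \emph{interior} hole boundaries after the flux-corrector manipulation, namely $h=\eps N^i\Psi_{kij}(x/\eps)\partial_j\partial_k\ol u^\eta$ with $\|h\|_{L^2(\partial\Omega^\eps_{\rm int})}\le C(\eps\eta^{d-1})^{1/2}$ (the total surface measure of $\partial\Omega^\eps_{\rm int}$ is $\sim\eps^{-1}\eta^{d-1}$). Pairing $h$ with the test function requires the trace inequality $\|w\|_{L^2(\partial\Omega^\eps_{\rm int})}\le C\kappa_{\eps,\eta}\|w\|_{H^1(\Omega^\eps)}$ of Proposition \ref{prop:trace} (Conca--Donato), whose constant $\kappa_{\eps,\eta}$ blows up above the scale $\eta_{{\rm cr},1}$; comparing $\kappa_{\eps,\eta}(\eps\eta^{d-1})^{1/2}$ with $\sqrt{\eps}\,\eta^{d/2}$ is exactly what yields $\eta^{d-1}$ versus $\sqrt{\eps}\,\eta^{d/2}$. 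This ingredient is absent from your argument, and without it you cannot produce the $\eta^{d-1}$ regime.

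Relatedly, your claimed ``double cancellation'' is only half right, and taken at face value it would contradict the statement you are proving. The $\chi$-part of the conormal defect on $\partial\Omega^\eps_{\rm int}$, i.e.\ $-\eps\chi_{j,\eta}(x/\eps)N^i\partial_i\partial_j\ol u^\eta$, is indeed absorbed: in the paper's bookkeeping it is exactly the natural boundary term $-N\cdot F$ of the divergence-form right-hand side with $F^i=\eps b_{kij}(x/\eps)\partial_k\partial_j\ol u^\eta$, $b_{kij}=\Psi_{kij}+\chi_{j,\eta}\delta_{ik}$. But the $\Psi$-part of the normal trace survives as the genuine Neumann datum $h$ above; it does not cancel, and in the regime $\eta^{d-2}/\sigma_\eps^2\gg1$ it is the dominant contribution. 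If all interior-boundary terms cancelled as you assert, the rate would be $\sqrt{\eps}\,\eta^{d/2}$ in every regime, which is not what \eqref{eq:uetaH1} says. (Also beware that integrating the term $\eps\,\partial_{x_i}[\chi_{j,\eta}(x/\eps)]\,\partial^2_{ij}\ol u^\eta$ by parts ``into the bulk'' against $z^\eps$, and likewise unwinding $B_\eta$ by a second integration by parts, produces boundary integrals involving $\nabla z^\eps$ on $\partial\Omega^\eps_{\rm int}$, which has no trace for $z^\eps\in H^1$; the manipulation must be done at the level of the strong form, as in \eqref{eq:zetambvp}, before testing.) The remaining ingredients of your sketch (cut-off versus the paper's explicit boundary-layer functions $\Phi_1,\Phi_2$; mollification to avoid third derivatives, which the paper sidesteps by keeping the $\eps\,\partial_i(\chi_{j,\eta}(x/\eps)\partial^2_{ij}\ol u^\eta)$ term in divergence form) are acceptable variants.
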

When $\eta$ is fixed and of order one, $\ol u^\eta$ is precisely the homogenized solution and convergence rates for the left hand side of \eqref{eq:uetaH1} was proved in \cite{MR1195131}, in the more general context of elasticity (elliptic) systems, and the bound is given by $C\sqrt{\eps}\|\ol u^\eta\|_{H^3}$. Their result was generalized recently in \cite{MR3659366} and in \cite{MR4240768} for less regular holes and less regular $\ol u^\eta$. The novelty of our result here is the explicit dependence on $\eta$. Theorem \ref{thm:uetarate} is proved in Section \ref{sec:uetarate}. We do not recover the classical rate $O(\sqrt{\eps})$ by setting $\eta = 1$ in \eqref{eq:uetaH1} as some of the estimates used in the proof are useful only in the dilute setting. Note also that the rate $O(\eta^{d-1})$ is sharper than $O(\sqrt{\eps})$ when $\eta \lesssim \eps^{\frac{1}{2(d-1)}}$, that is, below the scale $\sqrt{\eta_{{\rm cr},1}}$.

The term $z_\eps := \eps \chi_{k,\eta}\partial_k \ol u^\eta$ is the so-called \emph{corrector}. When $\eta$ is of order $O(1)$, it is necessary to add $z_\eps$ to $\ol u^\eta$ so that the corrected quantity $\ol u^\eta + z_\eps$ is close to $u^\eps$ in $H^1$. As mentioned earlier, in the dilute setting with zero Neumann data in the holes, such correctors are not needed for the strong convergence of $u^\eps \to u$ in $H^1$. A simple weak convergence argument suffices; see \cite{MR974289}. Moreover, a quantification of the argument (see Remark \ref{rem:altmethod}) yields
\begin{equation}
\label{eq:uepsuH1}
\|u^\eps - u\|_{H^1(\Omega^\eps)} \le C\eta^{\frac d2}.
\end{equation}
Using the results in Section \ref{sec:rcellp} we check $\|z_\eps\|_{H^1(\Omega)} \le C\eta^{\frac d2}$. This together with \eqref{eq:uetaH1} also yield the above estimate. Apparently, the one in Remark \ref{rem:altmethod} is easier. On the other hand, for $d\ge 3$ the right hand side in \eqref{eq:uetaH1} is much smaller than $O(\eta^{\frac d2})$; hence, for $d\ge 3$, adding the corrector to $\ol u^\eta$ yields a sharper approximation of $u^\eps$ in $H^1$.

At this point we also mention that the boundary modification of $\Omega^\eps$ near $\partial \Omega$ in Section \ref{sec:geosetup} is more complicated than a more commonly used one, namely removing (or filling in) the boundary holes, as in  \cite{MR3659366,MR4240768}. The exterior boundary in this latter case is simply $\partial \Omega$. We elaborate the apparently more complex case to demonstrate a fact taken often as granted: boundary modifications do not play essential roles in homogenization. 

\subsection*{Notations} The unit (flat) torus of $\R^d$, i.e.\,the closed unit cube $[-\frac12,\frac12]^d$ with opposite faces identified, is denoted by $\bT^d$. The open unit cube $(-\frac12,\frac12)^d$ is denoted by $Q_1$. Functions in $\bT^d$ are treated at the same time as $\bT^d$-periodic functions defined in $\R^d$. Similarly, functions in $\bT^d\setminus \eta \ol T$ are treated as $\bT^d$-periodic functions defined in $\R^d_{\rm f}$. Given a set $A \subset \R^d$ and $r > 0$, $rA$ denotes the rescaled set $\{rx \,:\, x \in A\}$, $\fint_A$ is the averaged integral over $A$, $|A|$ is its volume and $|\partial A|$ is the surface volume of its boundary $\partial A$. 

Under the geometric set-up (A), the space of $H^1(\Omega^\eps)$ functions vanishing in $\partial \Oext$ is denoted by
\begin{equation}
\label{eq:Vepsdef}
V_\eps := \{v \in H^1(\Omega^\eps) \,:\, v = 0 \, \text{ in } \, \partial \Oext\}.
\end{equation}
We use $L^2_0(\partial T)$ to denote the space of square integrable functions in $\partial T$ that are also mean zero. Concerning Neumann boundary conditions at the boundary of the holes, $N$ always denotes the outer normal vector pointing out of the hole. Throughout the paper, summation convention that repeated indices are summed over is always invoked. Finally, a bounding constant $C$ in an estimate is called universal if it depends only on $d,\alpha,\Omega,T$.

\section{Preliminary results}

In this section, we give some basic results that will be used to prove the main theorems. For each fixed $\eps,\eta \in (0,1)$, let $\Omega^\eps$ be defined as in Section \ref{sec:geosetup}. We recall an energy estimate for Poisson equations in $\Omega^\eps$ with general Neumann data prescribed at the inner boundary. This will be used to quantify the convergence rates in Theorem \ref{thm:uetarate}. We also introduce the periodic layer potential operators and some fundamental properties, which are key tools for the analysis of the rescaled cell problems.

\subsection{Basic energy estimates}
\label{sec:energy}
We consider the mixed boundary value problem:
\begin{equation}
\label{eq:mbvppd}
\left\{
\begin{aligned}
&-\Delta w = \div F + f_0, &\quad &\text{in } \Omega^\eps,\\
&\frac{\partial w}{\partial N} = -N\cdot F + h, &\quad &\text{in } \partial \Omega^\eps_{\rm int},\\
&w = g_0, &\quad &\text{in } \partial \Omega^\eps_{\rm ext}.
\end{aligned}
\right.
\end{equation}
Here, $f_0$ and $F = (F^i)$ are, respectively, scalar and vector fields belonging to $L^2(\Omega)$, $g_0 \in H^1(\Omega)$ and $h \in L^2(\partial \Omega^\eps_{\rm int})$. By a weak solution to \eqref{eq:mbvppd} we mean a function $w \in H^1(\Omega^\eps)$ satisfying $w - g_0 = 0$ in $\partial \Omega^\eps_{\rm ext}$ and
\begin{equation*}
\int_{\Omega^\eps} \nabla w\cdot \nabla \varphi = \int_{\Omega^\eps} f_0 \varphi - F \cdot \nabla \varphi + \int_{\partial \Omega^\eps_{\rm int}} h \varphi, \qquad \forall \varphi \in V_\eps.
\end{equation*}
Evidently, the identity above can be replaced by
\begin{equation}
\label{eq:wfmbvp}
\int_{\Omega^\eps} \nabla (w-g_0)\cdot \nabla \varphi = \int_{\Omega^\eps} f_0 \varphi - (\nabla g_0+ F) \cdot \nabla \varphi + \int_{\partial \Omega^\eps_{\rm int}} h \varphi. 
\end{equation}
Thanks to the Poincar\'e type inequality \eqref{eq:pipd} which can be applied to $w-g_0$, an application of the Lax-Milgram theorem shows, for each fixed $\eps$ and $\eta$, there is a unique weak solution $w \in H^1(\Omega^\eps)$ for \eqref{eq:mbvppd}. Define the parameter
\begin{equation}
\label{eq:kappadef}
\kappa_{\eps,\eta} := \begin{cases}
\max\{(\eps^{-1}\eta^{d-1})^{\frac12}, (\eps\eta)^{\frac12}\},  \quad &\text{if } d\ge 3,\\
\max\{(\eps^{-1}\eta^{d-1})^{\frac12}, (\eps\eta|\log \eta|)^{\frac12}\}, \quad &\text{if } d=2.
\end{cases}
\end{equation}
The following energy estimate holds.

\begin{theorem}\label{thm:energyest} Assume the geometric set-up {\upshape(A)}. There is a universal constant $C > 0$ such that, for any fixed $\eps$ and $\eta$, the unique weak solution $w$ to the problem \eqref{eq:mbvppd} satisfies
\begin{equation}
\label{eq:eembvp}
\|w\|_{H^1(\Omega^\eps)} \le C\{\|f_0\|_{L^2(\Omega^\eps)} + \|F\|_{L^2(\Omega^\eps)} + \|g_0\|_{H^1(\Omega^\eps)} + \kappa_{\eps,\eta} \|h\|_{L^2(\partial \Omega^\eps_{\rm int})}\}.
\end{equation}
\end{theorem}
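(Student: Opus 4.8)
The plan is to use the weak formulation \eqref{eq:wfmbvp} with test function $\varphi = w - g_0 \in V_\eps$, which is admissible since $w - g_0$ vanishes on $\partial\Omega^\eps_{\rm ext}$. This gives
\begin{equation*}
\int_{\Omega^\eps} |\nabla(w-g_0)|^2 = \int_{\Omega^\eps} f_0 (w-g_0) - (\nabla g_0 + F)\cdot\nabla(w-g_0) + \int_{\partial\Omega^\eps_{\rm int}} h (w-g_0).
\end{equation*}
The first two terms on the right are handled by Cauchy--Schwarz and Young's inequality in the standard way, producing $\|f_0\|_{L^2}\|w-g_0\|_{L^2} + (\|F\|_{L^2}+\|\nabla g_0\|_{L^2})\|\nabla(w-g_0)\|_{L^2}$, and the $L^2$ norm of $w-g_0$ is controlled by its gradient via the Poincar\'e inequality \eqref{eq:pipd} valid on $\Omega^\eps$ for functions vanishing on $\partial\Omega^\eps_{\rm ext}$ (with an $\eps,\eta$-independent constant, per the geometric set-up). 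Absorbing the gradient terms into the left-hand side then yields $\|\nabla(w-g_0)\|_{L^2} \lesssim \|f_0\|_{L^2} + \|F\|_{L^2} + \|g_0\|_{H^1} + (\text{boundary term})$, and hence the bound on $\|w\|_{H^1}$ modulo controlling the last integral.

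The crux is the boundary term $\int_{\partial\Omega^\eps_{\rm int}} h(w-g_0)$. By Cauchy--Schwarz on $\partial\Omega^\eps_{\rm int}$ this is at most $\|h\|_{L^2(\partial\Omega^\eps_{\rm int})}\|w-g_0\|_{L^2(\partial\Omega^\eps_{\rm int})}$, so what I need is a trace-type inequality of the form
\begin{equation*}
\|v\|_{L^2(\partial\Omega^\eps_{\rm int})} \le C\,\kappa_{\eps,\eta}\,\|\nabla v\|_{L^2(\Omega^\eps)}, \qquad v \in V_\eps,
\end{equation*}
with $\kappa_{\eps,\eta}$ as in \eqref{eq:kappadef}. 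I would prove this cellwise: on each interior cell $\eps k + \eps Q_1$ (for $k\in J_{\rm int}$) rescale by $\eps$ to the reference cell $Q_1 \setminus \eta\ol T$, and establish a trace inequality there controlling $\|v\|_{L^2(\eta\partial T)}$ by $\|\nabla v\|_{L^2(Q_1\setminus\eta\ol T)}$ plus a correction. The surface $\eta\partial T$ has measure $\sim \eta^{d-1}$, which after unscaling in $\eps$ accounts for the $(\eps^{-1}\eta^{d-1})^{1/2}$ factor; but because one cannot subtract the mean of $v$ (it is not free on a single cell), one also picks up a term involving $\|v\|_{L^2}$ on the cell, and estimating \emph{that} by $\|\nabla v\|_{L^2}$ on the shrinking set $\eta Q_1$-neighborhood of the hole requires the $\eps$-scaled version of the Poincar\'e inequality recorded in \eqref{eq:sigepsdef}, which is exactly where the second branch $(\eps\eta)^{1/2}$ (resp.\ $(\eps\eta|\log\eta|)^{1/2}$ for $d=2$) and the maximum in \eqref{eq:kappadef} enter. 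Summing the squared cellwise estimates over $k \in J_{\rm int}$ and using that the cells are disjoint recovers the global bound. The boundary holes $k \in J_{\rm bdr}$ are handled by absorbing them into the Dirichlet part; since $\partial\Omega^\eps_{\rm int}$ only includes interior holes by definition, they contribute nothing to $\|h\|_{L^2(\partial\Omega^\eps_{\rm int})}$ and need no separate treatment.

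The main obstacle is getting the trace inequality on the reference cell with the \emph{sharp} dependence on $\eta$, i.e.\ identifying precisely the $(\eps\eta)^{1/2}$ versus $(\eps^{-1}\eta^{d-1})^{1/2}$ competition. A clean way is to use a scaled trace theorem on the annular region $B_{2\eta}\setminus \eta\ol T$: by rescaling that annulus by $\eta$ to $B_2\setminus\ol T$, a fixed trace inequality there gives $\|v\|_{L^2(\eta\partial T)}^2 \le C\eta^{d-1}\big(\eta^{-1}\|v - \langle v\rangle\|_{L^2(\eta B_2\setminus\eta\ol T)}^2/\eta^{d} \cdot \eta^{d}\cdot\eta^{-1} \dots\big)$ — one must track powers of $\eta$ carefully — plus $\eta^{d-1}\langle v\rangle^2$ where $\langle v\rangle$ is the mean over the annulus; the oscillation part is controlled by $\eta\|\nabla v\|_{L^2}^2$ and the mean part by the Poincar\'e constant $\sigma_\eps^2/\eps^2$ appearing in \eqref{eq:sigepsdef} times $\|\nabla v\|_{L^2}^2$. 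Balancing $\eps^{-1}\eta^{d-1}$ against $\eps\eta$ (through the relation between $\sigma_\eps$ and the branches of $\kappa_{\eps,\eta}$) is the delicate bookkeeping, but it is elementary once the two competing mechanisms — small surface area versus the Poincar\'e loss on the shrinking hole — are isolated. Everything else is the routine Lax--Milgram/Young's inequality machinery already set up in the excerpt.
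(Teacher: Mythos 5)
Your argument is correct and follows essentially the same route as the paper: test the weak formulation with $w-g_0$, absorb the gradient terms via Young's inequality and the Poincar\'e inequality \eqref{eq:pipd}, and control the interior-boundary term through the trace inequality $\|v\|_{L^2(\partial\Omega^\eps_{\rm int})}\le C\kappa_{\eps,\eta}\|v\|_{H^1(\Omega^\eps)}$, which the paper records as Proposition \ref{prop:trace} and attributes to Conca--Donato rather than reproving. Your cellwise sketch of that trace inequality --- surface measure $\sim\eta^{d-1}$ acting on the mean versus the Poincar\'e-type loss on the shrinking neighborhood of the hole acting on the oscillation --- correctly accounts for the two branches of $\kappa_{\eps,\eta}$ in \eqref{eq:kappadef}.
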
 

We note that $\kappa_{\eps,\eta}$ blows up as $\eps\to 0$ if $\eta = \eta_\eps$ shrinks slower than $\eps^{\frac{1}{d-1}}$, i.e.\,if $\eta \gg \eta_{{\rm cr},1}$. Secondly, in both lines of \eqref{eq:kappadef}, the first terms dominate if $\eta \gg \eta_{{\rm cr},2}$ and the second terms dominate if $\eta \lesssim \eta_{{\rm cr},2}$; see the paragraph above and below \eqref{eq:sigepsdef} for the definitions of $\eta_{{\rm cr},1}$ and $\eta_{{\rm cr},2}$ respectively.

\begin{proof} We need to estimate the trace in $\partial \Oint$ for functions in $H^1(\Omega^\eps)$; this is given in Proposition \ref{prop:trace} and is due to Conca and Donato \cite{MR974289}. The rest of the proof is standard. Take $\varphi = w-g_0$ in \eqref{eq:wfmbvp}; we get
\begin{equation*}
\begin{aligned}
\|\nabla(w-g_0)\|_{L^2(\Omega^\eps)}^2 \le \; &\|f_0\|_{L^2(\Omega^\eps)}\|w-g_0\|_{L^2(\Omega^\eps)} + (\|\nabla g_0 + F\|_{L^2(\Omega^\eps)})\|\nabla(w-g_0)\|_{L^2(\Omega^\eps)}\\
 &\; + \|h\|_{L^2(\partial \Oint)}\|w-g_0\|_{L^2(\partial \Oint)}.
 \end{aligned}
\end{equation*}
Apply the trace inequality \eqref{eq:trimbvp} to $w-g_0$ for the last item and then the Poincar\'e inequality \eqref{eq:pipd} for $\|w-g_0\|_{L^2(\Omega^\eps)}$. We then can control $\|w-g_0\|_{H^1}$ and $\|w\|_{H^1(\Omega^\eps)}$ as desired.
\end{proof}

\subsection{The periodic layer potentials}\label{sec:lp}

Recall that $\Gamma \in C^\infty(\R^d\setminus \{0\})$ defined by
\begin{equation*}
\Gamma(x) = \begin{cases}
\frac{1}{(2-d)|\partial B_1|}\frac{1}{|x|^{d-2}}, \quad &d\ge 3,\\
\frac{1}{2\pi} \log |x| \quad &d=2,
\end{cases}
\end{equation*}
is the fundamental solution of the Laplace equation in $\R^d$, i.e.\,$\Delta \Gamma = \delta_0$. Classical layer potential theory provides useful tools to solve Laplace equations inside and exterior to $T$, with prescribed Dirichlet, Neumann or transmission data; see e.g.\,\cite{Folland,AmmKan}.

In the next section we will study the rescaled cell problem \eqref{eq:rcellp}, which is a Laplace equation in the punctured torus $\frac1\eta\bT^d\setminus \ol T$, with Neumann data at the boundary $\partial T$. The periodic variant of layer potentials can be used to solve it. For this, we consider the fundamental solution $G$ on the unit torus, i.e.
\begin{equation*}
  \Delta G = \delta_0 - 1 \quad \text{in } \bT^d, \qquad \textstyle\int_{\bT^d} G = 0.
\end{equation*}
It is known that $G = \Gamma + R$, and $R$ is a smooth function in the unit cube $Q_1$. More precisely, $R$ is the unique solution that solves
\begin{equation}
\label{eq:Rdef}
  -\Delta R = 1 \quad \text{in } Q_1, \qquad R + \Gamma(x) \, \text{ is in } C^\infty(\bT^d\setminus \{0\}).
\end{equation}
Note that $R$ is smooth in $\ol Q_1$ but derivatives of $R$ do not match on opposite sides of $\partial Q_1$.

For $d\ge 2$, we can check that $G^\eta(x) := \eta^{d-2}G(\eta x)$ is the fundamental solution of the Laplace equation in the rescaled torus $\tfrac{1}{\eta} \bT^d$, i.e.
\begin{equation*}
\Delta G^\eta = \delta_0 - \eta^d \quad \text{in } \textstyle\frac{1}{\eta}\bT^d, \qquad \textstyle\int_{\frac{1}{\eta} \bT^d} G^\eta = 0.
\end{equation*}
We then verify that $G^\eta$ is a simple perturbation of $\Gamma$, as follows
\begin{equation}
\label{eq:Gpert}
\begin{aligned}
G^\eta(x) &= \Gamma(x) + \begin{cases}
\eta^{d-2} R(\eta x), \quad &d\ge 3,\\
\frac{1}{2\pi} \log \eta + R(\eta x), \quad &d=2.
\end{cases}\\
\nabla G^\eta(x) &= \nabla \Gamma(x) + \eta^{d-1} \nabla R(\eta x), \qquad\quad\;\; d\ge 2.
\end{aligned}
\end{equation}

The periodic single-layer and double-layer potential operators associated to the set $T$ (viewed as a subset of $\frac1\eta\bT^d$) are then defined by replacing $\Gamma$ by $G^\eta$ in the classical operators: given a momentum function $\phi$ defined in $\partial T$ and for $x\in \frac1\eta \bT^d \setminus \partial T$, they are defined, respectively, by
\begin{eqnarray}
  \cS^\eta_\fp [\phi](x) &:=& \int_{\partial T} G^\eta(y-x) \phi(y) dy,\label{eq:cSeta}\\
  \cD^\eta_\fp [\phi](x) &:=& \int_{\partial T} \frac{\partial G^\eta(y-x)}{\partial N_y} \phi(y) dy.\label{eq:cDeta}
\end{eqnarray}
We mainly put the operators in the $L^2$ setting. It is easy to check the following: for $\phi \in L^2(\partial T)$, the double-layer potential $\cD^\eta_\fp[\phi]$ always defines a harmonic function in $\frac1\eta \bT^d \setminus \partial T$, but the single-layer potential $\cS^\eta_\fp[\phi]$ is harmonic there if and only if $\int_{\partial T} \phi = 0$. On the other hand, $\cS^\eta_\fp[\phi]$ is continuous across $\partial T$ and the trace there is still denoted by $\cS^\eta_\fp[\phi]$ as the definition \eqref{eq:cSeta} makes sense also for $x\in \partial T$. $\cS^\eta_\fp[\phi]$ hence defines a continuous function over $\eta^{-1}\bT^d$. 

The most important property of layer potential operators are the trace formulas, also known as jump relations. A simple modification of the classical theory yields: for $x\in \partial T$,
\begin{equation}
\label{eq:SDtrace}
  \begin{aligned}
  \frac{\partial \cS^\eta_\fp[\phi]}{\partial N}\Big\rvert_\pm (x) &:= \lim_{t\to 0+} N_x \cdot \left(\nabla \cS^\eta_\fp[\phi]\right)(x\pm tN_x) = (\pm \frac12 I + \cK^{\eta,*}_\fp)[\phi](x),\\
  \cD^\eta_\fp[\phi]\big\rvert_\pm(x) &:= \lim_{t\to 0+} \cD^\eta_\fp[\phi](x\pm tN_x)= (\mp \frac12 I + \cK^\eta_\fp)[\phi](x).
\end{aligned}
\end{equation}
Here, $\cK^\eta_\fp$ is defined by the same formula as $\cD^\eta_\fp$ in \eqref{eq:cDeta} but now both variables $x,y$ live in $\partial T$, i.e.,
\begin{equation}
\label{eq:cKeta}
\cK^\eta_\fp [\phi](x) := \int_{\partial T} \frac{\partial G^\eta(y-x)}{\partial N_y} \phi(y) dy, \qquad x\in \partial T,
\end{equation}
and $\cK^{\eta,*}_\fp$ is the adjoint of $\cK^\eta_\fp$. In view of the regularity of $\partial T$ (being $C^{1,\alpha}$ is sufficient), they are regular integral operators on $\partial T$, and, moreover, compact linear transformations of $L^2(\partial T)$. The jump relations above show that $\cD^\eta_\fp[\phi]$ and $N_x\cdot \nabla\cS^\eta_\fp[\phi]$ have jumps across $\partial T$. They also suggest how to solve boundary value problems in $T$ and $\ol T^c$ with Dirichlet or Neumann data put in $\partial T$.

\subsubsection{Solvability for exterior Neumann problems}

In \cite{MR4075336,MR4172687}, we have systematically studied the properties of $-\frac12 I + \cK^{\eta,*}_{\fp}$ and its adjoint operator $-\frac12 I + \cK^\eta_{\fp}$ which provide key tools for the asymptotic analysis of the exterior Dirichlet problems in the punctured torus $\tfrac1\eta \bT^d\setminus \ol T$. For the exterior Neumann problem \eqref{eq:rcellp}, we need to study $\frac12 I + \cK^\eta_{\fp}$ and $\frac12 I + \cK^{\eta,*}_{\fp}$. 

For notational simplicity, we present the case of $\eta = 1$ and omit $\eta$ from the notations. 
Recall (see \cite{MR4075336}) that the constant function with value $1$ over $\partial T$ satisfies
\begin{equation}
\label{eq:-hK1=vT}
(-\frac12 I + \cK_{\fp})[1] = -|T|.
\end{equation}
If $h \in \ker(\frac12 I + \cK^*_{\fp})$, we have
\begin{equation*}
\int_{\partial T} h = -\frac{1}{|T|}\int_{\partial T} h(-\frac12 I + \cK_{\fp})[1]  = -\frac{1}{|T|}\int (-\frac12 I + \cK^*_{\fp})[h] = |T|^{-1}\int_{\partial T} h.
\end{equation*}
Since by assumption $|T|\ne 1$, we conclude that $h \in L^2_0(\partial T)$. This shows $\ker(\frac12 I +\cK^*_{\fp}) \subset L^2_0(\partial T)$, and if we set $u = \cS_{\fp}[h]$ and let $u_+$ and $u_-$ be its component outside and inside $T$, respectively, then $u_\pm$ is harmonic. Since $\frac{\partial u_+}{\partial N} = N\cdot \nabla u_+ = (\frac12 I + \cK^*_{\fp})[h] = 0$, the Green's identity
\begin{equation*}
0 = -\int_{\partial T} u_+ \frac{\partial u_+}{\partial N} = \int_{\bT^d \setminus \ol T} |\nabla u_+|^2 
\end{equation*}
shows that $u_+$ is a constant. By the continuity of $u$ across $\partial T$, we conclude that $u_-$ is the same constant in $T$, and, finally, that $h = \frac{\partial u_+}{\partial N} - \frac{\partial u_-}{\partial N} = 0$. This and the Fredholm theory allow us to conclude that
\begin{equation}
\label{eq:plpkerran}
\begin{aligned}
&\ker\left(\frac12 I + \cK_{\fp}^*\right) = \ker\left(\frac12 I + \cK_{\fp}\right) = \{0\}, \\
&\ran\left(\frac12 I + \cK_{\fp}^*\right) = \ran\left(\frac12 I + \cK_{\fp}\right) = L^2(\partial T). 
\end{aligned}
\end{equation}
Clearly, the argument above works also in the rescaled torus $\frac1\eta \bT^d$, and the above identities hold for $\cK^\eta_\fp$ and $\cK^{\eta,*}_\fp$. In particular, $\frac12 I + \cK^{\eta,*}_\fp$ is invertible in $L^2(\partial T)$.

\begin{remark}
Note that the connectedness of $\ol T^c$ is required above to conclude $\nabla u_+ = 0$, but $T$ is allowed to have the form $\cup_{i=1}^M D_i$ if $D_i$'s are disjoint and each $D_i$ is simply connected. 
Indeed, \eqref{eq:-hK1=vT} still holds in this case: the double-layer potential $\cD_{\fp,i}$ associated to each set $D_i$ is defined, and, for $x \in \bT^d\setminus (\cup_{i=1}^M D_i)$,
\begin{equation*}
\mathcal{D}_{\fp}[1](x) = \sum_{i=1}^M \mathcal{D}_{\fp}[\mathbf{1}_{\partial T_i}](x) = \sum_{i=1}^M \mathcal{D}_{\fp,i}[1](x) = -\sum_{i=1}^M |D_i| = -|T|.
\end{equation*}
This is a big contrast with the case of Dirichlet boundary conditions, as the dimension of $\ker(-\frac12 I + \cK_\fp)$ depends on the number $M$.
\end{remark}

\subsubsection{Perturbative results of periodic layer potentials} In view of the perturbative relations \eqref{eq:Gpert}, we expect that, for sufficiently small $\eta$, the operators $\cS^\eta_\fp$, $\cD^\eta_\fp$, $\cK^\eta_\fp$ and $\cK^{\eta,*}_\fp$ are perturbations of the corresponding classical layer potential operators $\cS, \cD, \cK$ and $\cK^*$. More precisely, $\cS$, $\cD$ are $\cK$ are defined as in \eqref{eq:cSeta}, \eqref{eq:cDeta} and \eqref{eq:cKeta}, but with $G^\eta$ replaced by $\Gamma$, and $\cK^*$ is the adjoint of $\cK$.
Indeed, for the single-layer potential, we observe that for $d\ge 3$,
\begin{equation}
\label{eq:cSpert}
  \cS^\eta_\fp = \cS + \eta^{d-2} \cR^\eta_1,
\end{equation}
where $\cR^\eta_1$ is a regular integral operator
\begin{equation*}
  \cR^\eta_1[\phi](x) = \int_{\partial T} R(\eta(y-x)) \phi(y) dy, \quad x\in \textstyle\frac1\eta Q_1.
\end{equation*}
Since $R\in C^\infty(\ol Q_1)$, the above defines a continuous function in $\frac1\eta\bT^d$. 
Similarly, for $d\ge 2$,
\begin{equation*}
\cK^{*,\eta}_{\fp} = \cK^* + \eta^{d-1}\cR^{\eta}_{2},
\end{equation*}
where $\cK^*$ is the Neumann-Poincar\'e operator associated to the free-space layer potentials associated to $\partial T$, and $\cR^{\eta}_{2}$ is a regular integral operator defined for $x\in \frac1\eta Q_1$. They are given, respectively, by
\begin{equation*}
\cK^*[h](x) := \int_{\partial T} \frac{1}{|\partial B_1|} \frac{N_x\cdot (y-x)}{|y-x|^{d}}h(y) dy, \quad \cR^{\eta}_{2}[h](x) := \int_{\partial T} N_x\cdot (\nabla R)(\eta(x-y)) h(y) dy.
\end{equation*}
It is well known that as long as $\R^d\setminus \ol T$ is connected, the operator $\frac12 I + \cK^*$ is invertible in $L^2(\partial T)$; see \cite[Chap.\,3]{Folland}; actually, the proof goes like the argument above \eqref{eq:plpkerran}. Recast $\frac12 I + \cK^{*,\eta}_\fp$ as
\begin{equation*}
(\tfrac12 I + \cK^*)\left[ I + \eta^{d-1} \cR^\eta_3\right], \quad \text{with} \quad \cR^\eta_3 = \left(\tfrac12 I + \cK^*\right)^{-1}\cR^\eta_2.
\end{equation*}
By \eqref{eq:plpkerran}, we already know that $\frac12 I + \cK^{*,\eta}_\fp$ is invertible in $L^2(\partial T)$. Using asymptotic analysis, we also obtain the following representation for it.

\begin{proposition}\label{prop:KpsNS} Under the geometric set-up {\upshape(A)}, and for $\eta \in(0,1)$ that is sufficiently small,
\begin{equation}
\label{eq:KstarNeumann}
\left(\frac12 I + \cK^{*,\eta}_\fp\right)^{-1} =\, \sum_{\ell =0}^\infty (-1)^\ell \eta^{\ell(d-1)} (\cR^\eta_3)^\ell \left(\frac12 I + \cK^*\right)^{-1}
\end{equation}
holds in $L^2(\partial T)$. The Neumann series above converges absolutely in $L^2(\partial T)$, and, moreover, it converges also in $C(\partial T)$, the space of continuous functions in $\partial T$.
\end{proposition}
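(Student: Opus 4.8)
The plan is to read \eqref{eq:KstarNeumann} as a convergent Neumann series arising from the factorization recorded just above the statement,
\[
\tfrac12 I + \cK^{*,\eta}_\fp = \left(\tfrac12 I + \cK^*\right)\left(I + \eta^{d-1}\cR^\eta_3\right), \qquad \cR^\eta_3 = \left(\tfrac12 I + \cK^*\right)^{-1}\cR^\eta_2.
\]
Inverting both sides gives $\left(\tfrac12 I + \cK^{*,\eta}_\fp\right)^{-1} = \left(I + \eta^{d-1}\cR^\eta_3\right)^{-1}\left(\tfrac12 I + \cK^*\right)^{-1}$, so both the identity \eqref{eq:KstarNeumann} and the two convergence assertions reduce to showing that, for $\eta$ small, $\left(I + \eta^{d-1}\cR^\eta_3\right)^{-1} = \sum_{\ell\ge 0} (-1)^\ell \eta^{\ell(d-1)}(\cR^\eta_3)^\ell$ with absolute convergence in $\mathcal{L}(L^2(\partial T))$ and in $\mathcal{L}(C(\partial T))$. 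The crux is therefore a bound on $\cR^\eta_3$ that is \emph{uniform in $\eta$}: once we know $\|\cR^\eta_3\|_{\mathcal{L}(L^2(\partial T))} + \|\cR^\eta_3\|_{\mathcal{L}(C(\partial T))} \le C_1$ for all small $\eta$, we pick $\eta_0$ with $C_1 \eta_0^{d-1} < 1$ and dominate the operator series termwise by the scalar geometric series $\sum_\ell (C_1\eta^{d-1})^\ell$.

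To get the uniform bound, first recall that $\tfrac12 I + \cK^*$ is invertible on $L^2(\partial T)$ — by the classical argument recalled above \eqref{eq:plpkerran}, which uses only that $\R^d\setminus \ol T$ is connected — and that it is also invertible on $C(\partial T)$: under (A1) the kernel of $\cK^*$ is weakly singular (here $\partial T \in C^{2,\alpha}$ gives $|N_x\cdot(y-x)| \le C|y-x|^2$), hence $\cK^*$ is compact on $C(\partial T)$, and $\tfrac12 I + \cK^*$ is injective there because its kernel lies in $L^2(\partial T)$ and is therefore trivial. It then remains to bound $\cR^\eta_2$ uniformly in $\eta$. Its kernel is $(x,y)\mapsto N_x\cdot (\nabla R)(\eta(x-y))$; since $\ol T \subset B_{1/3}$ by (A1), for $x,y\in\partial T$ we have $\eta(x-y)\in \eta B_{2/3}$, which for $\eta$ small lies in a fixed compact subset of $Q_1$ on which $R\in C^\infty(\ol Q_1)$ and all its derivatives are bounded; moreover the map $x\mapsto N_x$ is $C^{1,\alpha}$, hence Lipschitz, because $\partial T\in C^{2,\alpha}$. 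Consequently the kernel of $\cR^\eta_2$ is bounded, and Lipschitz in $x$, by constants independent of $\eta$. A Schur-test estimate then gives a uniform bound for $\|\cR^\eta_2\|_{\mathcal{L}(L^2(\partial T))}$, while the uniform equicontinuity in $x$ of the kernel (uniform also in $y$ and $\eta$) shows that $\cR^\eta_2$ maps both $L^2(\partial T)$ and $C(\partial T)$ into $C(\partial T)$, again with uniformly bounded norm. Composing with $\left(\tfrac12 I + \cK^*\right)^{-1}$ yields the desired uniform bound on $\cR^\eta_3$ on both spaces, and the proposition follows.

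I do not expect a genuine obstacle here; this is a soft perturbation argument, and the content is entirely in making every bound uniform in $\eta$. The one point that must be watched is precisely the $\eta$-uniformity of the bound on $\cR^\eta_2$ (hence on $\cR^\eta_3$), and it is here that the hypothesis $\ol T\subset B_{1/3}$ in (A1) is used: it keeps the rescaled argument $\eta(x-y)$ well away from $\partial Q_1$, across which the smooth part $R$ of the torus Green's function fails to be $C^1$ (cf.\ \eqref{eq:Rdef}). A secondary, entirely routine point is the invocation of the classical mapping properties of the Neumann--Poincar\'e operator $\cK^*$ on $C(\partial T)$ (or, should one prefer to phrase the $C(\partial T)$-convergence through H\"older spaces, on $C^{0,\gamma}(\partial T)$ for $\gamma<\alpha$), which are standard for $C^{2,\alpha}$ boundaries.
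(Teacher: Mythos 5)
Your proposal is correct and follows essentially the same route as the paper: factor $\frac12 I + \cK^{*,\eta}_\fp = (\frac12 I + \cK^*)(I + \eta^{d-1}\cR^\eta_3)$, establish invertibility of $\frac12 I + \cK^*$ on $C(\partial T)$ as well as on $L^2(\partial T)$, and reduce everything to an $\eta$-uniform bound on $\cR^\eta_3$ (the paper bounds $\|\cR^\eta_3\|_{L^2\to L^\infty}$ by $\|(\frac12 I + \cK^*)^{-1}\|_{L^\infty\to L^\infty}\|R\|_{C^1(Q_1)}|\partial T|^{1/2}$, which simultaneously handles both spaces; your Schur-test/equicontinuity phrasing is an equivalent packaging). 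No gaps.
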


\begin{proof} We already know that $(\frac12 I + \cK^*)^{-1}$ is bounded as a transformation in $L^2(\partial T)$. Since $\partial T$ is regular ($C^2$ is sufficient), one can show (c.f.\,\cite[Sec.\,3.B]{Folland}) that $(\frac12 I + \cK^*)$ is an invertible bounded linear transformation in $C(\partial T)$ as well: $\cK^*$ maps $L^\infty(\partial T)$ to $C(\partial T)$ and is compact as a linear transformation in $L^\infty(\partial T)$. Now given $g \in C(\partial T) \subset L^2(\partial T)$, there exists a unique $\phi \in L^2(\partial T)$ so that $(\frac12 I + \cK^*)[\phi] = g$, then $\phi \in L^2$ and $(\frac12 I + \cK^*)[\phi] \in C(\partial T)$ implies that $\phi \in C(\partial T)$. 
This shows the surjectivity in $C(\partial T)$ of $\frac12 I + \cK^*$. The injectivity is clear since $C(\partial T) \subset L^2(\partial T)$. The continuity of $\frac12 I + \cK^*$ is clear since $\cK^*$ is bounded as an operator in $L^\infty(\partial T)$, and the continuity of $(\frac12 I + \cK^*)^{-1}$ in $C(\partial T)$ then follows.

To prove the proposition, it suffices to show that $\cR^\eta_3$ is a bounded linear transformation in $L^2(\partial T)$ and in $C(\partial T)$, with operator norms uniformly bounded in $\eta$. Since it is easy to verify that $\cR^\eta_2$, and hence $\cR^\eta_3$, maps $L^2(\partial T)$ to $C(\partial T)$, it suffices to find a universal $C > 0$, so that $\|\cR^\eta_3\|_{L^2\to L^\infty} \le C$. To this end, we check that, for any $\phi \in L^2(\partial T)$,
\begin{equation*}
\|\cR^\eta_3[\phi]\|_{L^\infty} \le \|(\tfrac12 I + \cK^*)^{-1}\|_{L^\infty\to L^\infty} \|\cR^\eta_2[\phi]\|_{L^\infty} \le \|(\tfrac12 I + \cK^*)^{-1}\|_{L^\infty\to L^\infty} \|R\|_{C^1(Q_1)} |\partial T|^{\frac12} \|\phi\|_{L^2}.
\end{equation*}
Let $C$ be the constant above in front of $\|\phi\|_{L^2}$. For $\eta$ sufficiently small, namely $\eta^{d-1} < \frac34 C$, the Neumann series in the proposition converges absolutely, and the proof is complete.
\end{proof}

\begin{remark}\label{eq:NSL20inv} It is easy to check that $(\frac12I + \cK^*)$ and $(\frac12 I + \cK^*)^{-1}$ preserves the mean-zero property, i.e.\,$L^2_0(\partial T)$ is invariant under them. Similarly, $\cR^\eta_2$ (and hence $\cR^\eta_3$) also have this property: indeed, if $h \in L^2_0(\partial T)$, then
\begin{equation*}
\begin{aligned}
\int_{\partial T} \cR^\eta_2[h](x) dx &= \int_{\partial T}\left(\int_{\partial T} N_x \cdot (\nabla R)(\eta(x-y) dx\right) h(y) dy\\
&= \int_{\partial T} \left(\eta\int_T (\Delta R)(\eta(x-y)) dx \right) h(y) dy = -|T|\int_{\partial T} h = 0.
\end{aligned}
\end{equation*}
The terms in the Neumann series \eqref{eq:KstarNeumann}, hence, preserve $L^2_0(\partial T)$ also.
\end{remark} 

It is possible to obtain expansion formulas for the function $R$; see e.g.\,\cite{AmmKan,MR2244590}. Using such finer information, one can obtain more useful properties of $\cR^\eta_i$'s. 

\begin{proposition}\label{prop:Rexpan}
Let $d\ge 2$ and assume that the geometric set-up {\upshape (A)} holds.  Then the following results are true.
\begin{enumerate}
	\item[(1)] In a neighborhood of the origin, the function $R$ in  \eqref{eq:Rdef} behaves like
	\begin{equation}
	\label{eq:Rexpan}
	R(x) = R(0) - \frac1{2d}|x|^2 + O(|x|^4). 
	\end{equation}
	\item[(2)] Viewed as bounded linear operators from $L^2_0(\partial T)$ to $C(\partial T)$, $\cR^\eta_1$ and $\cR^\eta_3$ satisfy
	\begin{equation}
	\label{eq:cR1exp}
	\cR^\eta_1 = \eta^2 \cQ_1 + O(\eta^4), \qquad	\cR^\eta_3 = \eta (\frac12 I + \cK^*)^{-1} \cQ_2 + O(\eta^3), 	
	\end{equation}
	where 
	\begin{equation*}
\cQ_1[\phi](x) := -\frac1{2d}\int_{\partial T} |x-y|^2\phi(y) dy, \quad \cQ_2[\phi](x) := -\frac1{d}\int_{\partial T} N_x \cdot (x-y)\phi(y) dy.
	\end{equation*}
\end{enumerate}
\end{proposition}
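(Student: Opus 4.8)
The plan is to treat parts (1) and (2) somewhat independently, with part (1) feeding into part (2). For part (1), I would start from the characterization of $R$ in \eqref{eq:Rdef}, namely $-\Delta R = 1$ in $Q_1$ and $R + \Gamma \in C^\infty(\bT^d\setminus\{0\})$. Since $R$ is smooth near the origin (being the difference of the torus Green's function and the singular part $\Gamma$), it admits a Taylor expansion $R(x) = R(0) + \nabla R(0)\cdot x + \frac12 x^\top D^2R(0) x + O(|x|^3)$. The symmetry of the problem under $x \mapsto -x$ (both $\Gamma$ and the equation $-\Delta R = 1$ are even, and the periodic structure of $\bT^d$ respects the reflection) forces the odd-order terms to vanish, so the first-order term is zero and we get $R(x) = R(0) + \frac12 x^\top D^2R(0)x + O(|x|^4)$. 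To pin down $D^2R(0)$, I would use $-\Delta R = 1$: taking the Laplacian of the quadratic term gives $\tr D^2R(0) = -1$. Symmetry under the full octahedral symmetry group of the cube (permutations and sign changes of coordinates) forces $D^2R(0) = c\,I$ for a scalar $c$, hence $dc = -1$, i.e.\ $c = -1/d$, giving the quadratic term $-\frac{1}{2d}|x|^2$. This yields \eqref{eq:Rexpan}.

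For part (2), I would substitute the expansion \eqref{eq:Rexpan} directly into the integral kernels. Recall $\cR^\eta_1[\phi](x) = \int_{\partial T} R(\eta(y-x))\phi(y)\,dy$. Writing $R(\eta(y-x)) = R(0) - \frac{1}{2d}\eta^2|x-y|^2 + O(\eta^4|x-y|^4)$ and using $\phi \in L^2_0(\partial T)$, so that the constant term $R(0)\int_{\partial T}\phi = 0$ drops out, the leading surviving term is $-\frac{\eta^2}{2d}\int_{\partial T}|x-y|^2\phi(y)\,dy = \eta^2 \cQ_1[\phi](x)$, and the remainder is $O(\eta^4)$ uniformly in $x\in\partial T$ (since $\partial T$ is bounded, $|x-y|$ is bounded, and the error term in \eqref{eq:Rexpan} is controlled there). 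The $O(\eta^4)$ is measured in the $L^2_0(\partial T)\to C(\partial T)$ operator norm, which follows because $\|\int_{\partial T}\psi(x,y)\phi(y)\,dy\|_{C(\partial T)} \le \sup_x\|\psi(x,\cdot)\|_{L^2(\partial T)}\|\phi\|_{L^2}$ by Cauchy--Schwarz. For $\cR^\eta_3 = (\frac12 I + \cK^*)^{-1}\cR^\eta_2$, I would first expand $\cR^\eta_2[h](x) = \int_{\partial T} N_x\cdot(\nabla R)(\eta(x-y))h(y)\,dy$. From \eqref{eq:Rexpan}, $\nabla R(z) = -\frac1d z + O(|z|^3)$, so $(\nabla R)(\eta(x-y)) = -\frac{\eta}{d}(x-y) + O(\eta^3|x-y|^3)$, giving $\cR^\eta_2 = \eta\,\cQ_2 + O(\eta^3)$ in $L^2_0(\partial T)\to C(\partial T)$; then composing with the bounded operator $(\frac12 I + \cK^*)^{-1}$ (bounded on both $L^2(\partial T)$ and $C(\partial T)$ by Proposition \ref{prop:KpsNS}'s proof, and preserving $L^2_0$ by Remark \ref{eq:NSL20inv}) yields \eqref{eq:cR1exp}.

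The main obstacle, I expect, is bookkeeping the function spaces carefully: the expansions must be stated as operator-norm estimates from $L^2_0(\partial T)$ to $C(\partial T)$, so I need the mean-zero hypothesis to kill the $R(0)$ term in $\cR^\eta_1$ (this is essential — without it the leading term would be the rank-one operator $\phi\mapsto R(0)\int\phi$ rather than $\eta^2\cQ_1$), and I need the remainder estimates to hold in the sup-norm in $x$, which requires that the higher-order Taylor remainders of $R$ near $0$ are controlled uniformly on the compact set $\eta(\partial T - \partial T)$ for small $\eta$. A minor subtlety for $\cR^\eta_2$ is that the kernel involves $N_x$, which is bounded ($\partial T$ is $C^{2,\alpha}$, so the unit normal is continuous), so this causes no trouble; and the composition with $(\frac12 I + \cK^*)^{-1}$ is harmless since that operator is $\eta$-independent and bounded. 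Everything else is routine Taylor expansion combined with Cauchy--Schwarz.
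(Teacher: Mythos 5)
Your proof is correct, and for part (1) it takes a more self-contained route than the paper. The paper simply cites Ammari--Kang (Section 2.8), who compute the expansion of $R$ from the Fourier series representation of the torus Green's function $G$, and only remarks that the evenness of $R$ and the vanishing of $\nabla R(0)$ follow from symmetry. You instead derive the full quadratic term from first principles: evenness of $G$ and $\Gamma$ kills the odd-order Taylor terms, the equation $-\Delta R=1$ gives $\tr D^2R(0)=-1$, and invariance under the hyperoctahedral symmetry of the cube forces $D^2R(0)=cI$, hence $c=-1/d$ and the coefficient $-\frac{1}{2d}$. This buys a proof that avoids any explicit lattice-sum or Fourier computation and makes transparent where the factor $1/d$ comes from; the cost is that it does not produce the higher-order coefficients, which the Fourier approach yields if one wants to push the expansion further (the paper alludes to reading off second-order terms elsewhere). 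For part (2) your argument — substitute the expansion into the kernels, use $\phi\in L^2_0(\partial T)$ to kill the rank-one $R(0)$ term in $\cR^\eta_1$, control remainders uniformly on the compact set $\eta(\partial T-\partial T)$, pass to the $L^2_0\to C$ operator norm by Cauchy--Schwarz, and compose with the $\eta$-independent bounded operator $(\frac12 I+\cK^*)^{-1}$ — is exactly what the paper means by ``follows immediately,'' and you correctly identify the mean-zero hypothesis as the essential point.
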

\begin{proof}
The expansion formula of $R$ near the origin was obtained by Ammari and Kang \cite[Section 2.8]{AmmKan} using Fourier series representation of $G$; the qualitative feature of $R$, namely the evenness and the vanishing of first order derivatives, can be obtained using symmetry arguments. The expansion of $\cR^\eta_1$ then follows immediately; in particular, the constant $R(0)$ has no contribution if $\phi \in L^2_0(\partial T)$.
\end{proof}

\begin{remark} For $d=2$, in view of the formula \eqref{eq:Gpert}, the relation \eqref{eq:cSpert} still holds if the operators are restricted to the space $L^2_0(\partial T)$. The proposition further shows, for all $d\ge 2$ and $\phi \in L^2_0(\partial T)$,
\begin{equation}
\label{eq:cSpert1}
\cS^\eta_\fp[\phi] = \cS[\phi] + \eta^d \cQ_1[\phi] + O(\eta^{d+2}) \qquad \text{in } L^\infty(\partial T). 
\end{equation}
The $O(\eta^{d+2})$ means that this term has an operator norm from $L^2_0(\partial T)$ to $L^\infty(\partial T)$ of order $O(\eta^{d+2})$.
\end{remark}
\section{Analysis for the rescaled cell problems}
\label{sec:rcellp}


In this section we study the cell problem \eqref{eq:cellp} in the dilute setting, i.e.\,$\eta$ is small, and explore the asymptotic behaviors of the cell problem solutions as $\eps \to 0$. To make the removed hole fixed, we rescale the spatial variable $y \in \bT^d$ to $z = y/\eta$ and define the rescaled function
\begin{equation}
\label{eq:rschi}
{\tilde\chi}^{\,\eta}_k(z) := \textstyle\frac1{\eta}\chi_{k,\eta}(\eta z), \qquad z \in \textstyle\frac1\eta \bT^d \setminus \ol T.
\end{equation}
Then 
${\tilde\chi}^{\,\eta}_k$ solves
\begin{equation}
\label{eq:rcellp}
\left\{
\begin{aligned}
&-\Delta {\tilde\chi}^{\,\eta}_k(z) = 0, &\quad &z\in (\textstyle\frac1\eta \bT^d)\setminus \ol T,\\
&N_z\cdot \nabla {\tilde\chi}^{\,\eta}_k(z) = -e_k \cdot N_z, &\quad &z \in \partial T.
\end{aligned}
\right.
\end{equation}
We denote $e_k\cdot N$ by $N^k$ below. The above is an exterior Neumann problem in $\frac1\eta\bT^d$ outside the set $T$. We use layer potential theory to solve 
${\tilde\chi}^{\,\eta}_k$ and investigate its behavior from this rather explicit representation. In fact, ${\tilde\chi}^{\,\eta}_k$ can be written as a single-layer potential, and, hence, it is defined in the whole torus $\frac1\eta \bT^d$. We prove the following results for ${\tilde\chi}^{\,\eta}_k$ and $\chi_{k,\eta}$.

\begin{lemma}\label{lem:cprcp}
There exist universal positive constants $C$'s such that, for each $k\in \{1,\dots,d\}$ and for sufficiently small $\eta\in (0,1]$, the following holds.
\begin{itemize}
\item[(1)] The solution ${\tilde\chi}^{\,\eta}_k$ to \eqref{eq:rcellp} (and extended to $\frac1\eta\bT^d$ by the single-layer potential) satisfies
\begin{equation}
\label{eq:chietakH1bdd}
\|\nabla {\tilde\chi}^{\,\eta}_k\|_{L^2(\frac1\eta\bT^d)} + \eta \|{\tilde\chi}^{\,\eta}_k\|_{L^2(\frac1\eta \bT^d)} \le C.
\end{equation}
\item[(2)] The solution $\chi_{k,\eta}$ to \eqref{eq:cellp} is in $W^{1,\infty}(\bT^d\setminus \eta \ol T)$ and satisfies \begin{equation}
\label{eq:chietakbdd}
\|\chi_{k,\eta}\|_{L^\infty} \le C\eta  \quad\text{and}\quad \|\nabla \chi_{k,\eta}\|_{L^\infty} \le C.
\end{equation}
\item[(3)] For each $k=1,\dots,d$, let $w^0_k$ be defined as in \eqref{eq:extnp}. Then the following expansion holds:
\begin{equation}
\label{eq:chietakexpan}
\chi_{k,\eta} = \eta w^0_k + O(\eta^{d+1}), \qquad  \text{in } C(\eta\partial T).
\end{equation}
\end{itemize}
\end{lemma}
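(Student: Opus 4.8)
The plan is to represent $\tilde\chi^{\,\eta}_k$ as a single-layer potential and then transfer all estimates to that density via the invertibility results already established. First I would set $\tilde\chi^{\,\eta}_k = \cS^\eta_\fp[\phi^\eta_k]$ for a density $\phi^\eta_k \in L^2(\partial T)$ to be determined; applying the jump relation \eqref{eq:SDtrace} for the exterior normal derivative and the boundary condition in \eqref{eq:rcellp} gives $(\tfrac12 I + \cK^{\eta,*}_\fp)[\phi^\eta_k] = -N^k$. Since $N^k = e_k\cdot N \in L^2_0(\partial T)$ (its integral over $\partial T$ equals $\int_T \div e_k = 0$), and since $\tfrac12 I + \cK^{\eta,*}_\fp$ is invertible on $L^2(\partial T)$ and preserves $L^2_0(\partial T)$ by Proposition~\ref{prop:KpsNS} and Remark~\ref{eq:NSL20inv}, we get a unique $\phi^\eta_k \in L^2_0(\partial T)$ with $\|\phi^\eta_k\|_{L^2(\partial T)} \le C$ uniformly in small $\eta$; in fact the Neumann series \eqref{eq:KstarNeumann} gives $\phi^\eta_k = (\tfrac12 I + \cK^*)^{-1}[-N^k] + O(\eta^{d-1})$ in $C(\partial T)$. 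Because $\int_{\partial T}\phi^\eta_k = 0$, $\cS^\eta_\fp[\phi^\eta_k]$ is genuinely harmonic in $\tfrac1\eta\bT^d\setminus\partial T$, so it is a legitimate extension of $\tilde\chi^{\,\eta}_k$ to the whole torus.

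For part (1), I would compute the Dirichlet energy of the single-layer potential by Green's identity. Writing $u_\pm$ for the exterior/interior parts of $\cS^\eta_\fp[\phi^\eta_k]$ and using the jump relations, $\int_{\tfrac1\eta\bT^d\setminus\partial T}|\nabla u|^2 = \int_{\partial T} \tilde\chi^{\,\eta}_k\big([\partial_N u]_- - [\partial_N u]_+\big) = -\int_{\partial T}\cS^\eta_\fp[\phi^\eta_k]\,\phi^\eta_k$ (up to sign bookkeeping; the boundary term at infinity of the torus is absent). The perturbation formula \eqref{eq:cSpert1}, or simply the uniform $L^2\to C(\partial T)$ bound on $\cS^\eta_\fp$ restricted to $L^2_0$, gives $\|\cS^\eta_\fp[\phi^\eta_k]\|_{L^\infty(\partial T)}\le C$, so $\|\nabla\tilde\chi^{\,\eta}_k\|_{L^2}^2 \le C\|\phi^\eta_k\|_{L^2}\le C$. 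The weighted $L^2$ bound $\eta\|\tilde\chi^{\,\eta}_k\|_{L^2(\frac1\eta\bT^d)}\le C$ should follow from a Poincar\'e/Sobolev inequality on the torus $\tfrac1\eta\bT^d$ (whose diameter is $\sim\eta^{-1}$) together with control of the mean of $\tilde\chi^{\,\eta}_k$; one can normalize $\tilde\chi^{\,\eta}_k$ to have zero mean, note that such a normalization differs from \eqref{eq:rschi}'s by a constant which is itself $O(1)$ because $\cS^\eta_\fp[\phi^\eta_k]$ is bounded on $\partial T$ and harmonic outside, and the Poincar\'e constant on $\tfrac1\eta\bT^d$ scales like the diameter $\sim\eta^{-1}$.

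For part (2), I would undo the rescaling: $\chi_{k,\eta}(y) = \eta\,\tilde\chi^{\,\eta}_k(y/\eta)$, so $\|\chi_{k,\eta}\|_{L^\infty} = \eta\|\tilde\chi^{\,\eta}_k\|_{L^\infty}$ and $\|\nabla\chi_{k,\eta}\|_{L^\infty} = \|\nabla\tilde\chi^{\,\eta}_k\|_{L^\infty}$. The $L^\infty$ and $W^{1,\infty}$ bounds on $\tilde\chi^{\,\eta}_k = \cS^\eta_\fp[\phi^\eta_k]$ come from the mapping properties of the single-layer potential: with $\phi^\eta_k\in C(\partial T)$ (even $C^\alpha$ via elliptic regularity on $\partial T$, or just bounded) and $\partial T \in C^{2,\alpha}$, standard potential estimates give $\cS^\eta_\fp[\phi^\eta_k]\in C^{1,\alpha}$ near $\partial T$ with norms controlled by $\|\phi^\eta_k\|$, uniformly in $\eta$ since $G^\eta = \Gamma + (\text{smooth})$ by \eqref{eq:Gpert}; away from $\partial T$ one uses interior elliptic estimates for the harmonic function together with the maximum principle on $\tfrac1\eta\bT^d$ to propagate the boundary $L^\infty$ bound. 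For part (3), I would compare $\tilde\chi^{\,\eta}_k$ with $w^0_k$, which by classical theory is $w^0_k = \cS[\psi_k]$ with $(\tfrac12 I + \cK^*)[\psi_k] = -N^k$ (this is the $\eta=0$ version of the density equation, giving the formula promised in the Remark). Then $\phi^\eta_k - \psi_k = O(\eta^{d-1})$ in $C(\partial T)$ from the Neumann series, and on $\partial T$, $\tilde\chi^{\,\eta}_k - w^0_k = (\cS^\eta_\fp - \cS)[\psi_k] + \cS^\eta_\fp[\phi^\eta_k - \psi_k]$; by \eqref{eq:cSpert1} the first term is $O(\eta^d)$ and the second is $O(\eta^{d-1})\cdot O(1) = O(\eta^{d-1})$ in $L^\infty(\partial T)$. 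Multiplying by $\eta$ from the rescaling \eqref{eq:rschi} and evaluating on $\eta\partial T$ gives $\chi_{k,\eta} = \eta w^0_k(\cdot/\eta) + O(\eta^{d})$ on $\eta\partial T$; to reach the stated $O(\eta^{d+1})$ one must use the finer expansion, namely that $\phi^\eta_k - \psi_k = O(\eta^{d-1})$ has a structured leading term lying in $L^2_0(\partial T)$ against which $\cS^\eta_\fp$ picks up an extra power, or equivalently that the $O(\eta^{d-1})$ correction to $\phi^\eta_k$ combined with \eqref{eq:cSpert1} has effective size $O(\eta^{d})$ on $\partial T$ in the relevant pairing — so that altogether $\tilde\chi^{\,\eta}_k - w^0_k = O(\eta^{d})$ on $\partial T$ and hence $\chi_{k,\eta} - \eta w^0_k = O(\eta^{d+1})$ there. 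The main obstacle I anticipate is bookkeeping this last power of $\eta$ precisely: one has to track that the first correction term in the Neumann series for $\phi^\eta_k$ is $-\eta^{d-1}(\tfrac12 I+\cK^*)^{-1}\cR^\eta_3\psi_k$ and that, by Proposition~\ref{prop:Rexpan} and the mean-zero preservation, applying $\cS^\eta_\fp$ to it (or rather pairing it appropriately) is genuinely $O(\eta^{d})$ and not merely $O(\eta^{d-1})$, which is where the even/vanishing-derivative structure of $R$ near $0$ in \eqref{eq:Rexpan} does the real work.
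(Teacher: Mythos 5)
Your proposal is correct and follows essentially the same route as the paper: the single-layer representation $\tilde\chi^{\,\eta}_k=\cS^\eta_\fp[\phi_{k,\eta}]$ with $(\tfrac12 I+\cK^{\eta,*}_\fp)[\phi_{k,\eta}]=-N^k$, the Neumann series for the density, Green's identities on both sides of $\partial T$ plus Poincar\'e--Wirtinger for (1), the maximum principle and potential/elliptic estimates for (2), and for (3) exactly the refinement you flag as the ``main obstacle'' — namely that Proposition~\ref{prop:Rexpan} and the mean-zero preservation upgrade both $(\cS^\eta_\fp-\cS)[\phi^0_k]$ and the first Neumann-series correction from $O(\eta^{d-1})$ to $O(\eta^d)$ on $\partial T$ — which is precisely how the paper obtains \eqref{eq:rchietakexp} and hence \eqref{eq:chietakexpan}. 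The only cosmetic difference is that the paper gets the mean-zero property of $\tilde\chi^{\,\eta}_k$ for free from $\int_{\frac1\eta\bT^d}G^\eta=0$ rather than via your normalization detour.
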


\begin{proof}
Fix a $k\in \{1,\dots,d\}$ below. Let ${\tilde\chi}^{\,\eta}_k$ be the solution to \eqref{eq:rcellp}. Since $\chi_{k,\eta} = \eta {\tilde\chi}^{\,\eta}_k(\cdot/\eta)$, the estimates \eqref{eq:chietakbdd} are equivalent to 
\begin{equation}
\label{eq:chietakbdd1}
\|{\tilde\chi}^{\,\eta}_k\|_{L^\infty(\frac1\eta \bT^d\setminus \ol T)} + \|{\tilde\chi}^{\,\eta}_k\|_{L^\infty(\frac1\eta \bT^d\setminus \ol T)} \le C.
\end{equation}
We seek for ${\tilde\chi}^{\,\eta}_k = \cS^\eta_\fp[\phi_k]$ and, by the trace formula \eqref{eq:SDtrace}, $\phi_k = \phi_{k,\eta}$ should solve
\begin{equation*}
(\tfrac12 I + \cK^{\eta,*}_{\fp})[\phi_{k,\eta}] = -N^k.
\end{equation*}
By \eqref{eq:plpkerran} this integral equation over $\partial T$ is uniquely solvable; moreover, in view of Proposition \ref{prop:KpsNS} and the formula \eqref{eq:KstarNeumann}, for $\eta$ sufficiently small, we have
\begin{equation}
\label{eq:phikexpan}
\phi_{k,\eta}= \sum_{\ell=0}^\infty (-1)^\ell \eta^{\ell(d-1)} (\cR^\eta_3)^\ell[\phi^0_k] = \phi^0_k - \eta^{d-1} \cR^\eta_3[\phi^0_k] + O(\eta^{2(d-1)}),
\end{equation}
where $\phi^0_k$ (independent of $\eta$) is the moment function $(\frac12 I + \cK^*)^{-1}[-N^k]$. Since $N^k \in C(\partial T)$, the above identity holds in $C(\partial T)$ and in $L^2(\partial T)$. In particular, for some universal constant $C>0$, 
\begin{equation*}
\|\phi^0_k\|_{C(\partial T)} + \|\phi_{k,\eta}\|_{C(\partial T)} \le C. 
\end{equation*}
Set $w^0_k = \cS[\phi^0_k]$ where $\cS$ is the single-layer potential in the free space. Then, for $z\in \frac1\eta\bT^d\setminus T$, ${\tilde\chi}^{\,\eta}_k$ is given by
\begin{equation*}
\begin{aligned}
{\tilde\chi}^{\,\eta}_k(z) &= (\cS + \eta^{d-2}\cR^\eta_1)[\phi_{k,\eta}] \\
&= w^0_k + \eta^{d-2} \cR^\eta_1[\phi^0_k] - \eta^{d-1} \cS \cR^\eta_3[\phi^0_k] + O(\eta^{2d-3})
\end{aligned}
\end{equation*}
in $L^\infty(\textstyle\frac1\eta \bT^d\setminus T)$. When restricted to $z\in \partial T$, since $N^k$ averages to zero over $\partial T$, so do $\phi^0_k$ and each of the terms in \eqref{eq:phikexpan}.
Note that $w^0_k = \cS[\phi^0_k]$ is harmonic in $\R^d\setminus \partial T$ and satisfies $\frac{\partial w^0_k}{\partial N} = N^k$ at $\partial T$. Moreover, from the definition of single-layer potential we check that $|w^0_k| \to 0$ at infinity (to check this for $d=2$, the fact $\int_{\partial T} \phi^0_k = 0$ is crucial). In other words, $w^0_k$ is the unique solution to the \emph{local} problem \eqref{eq:extnp}. Finally, thanks to the observations in Remark \ref{eq:NSL20inv} and by Proposition \ref{prop:Rexpan} we further have
\begin{equation}
\label{eq:rchietakexp}
{\tilde\chi}^{\,\eta}_k = w^0_k + \eta^d \cQ_1[\phi^0_k] - \eta^{d}\cS(\frac12 I + \cK^*)^{-1}\cQ_2[\phi^0_k] + O(\eta^{d+2} + \eta^{2(d-1)}) \qquad \text{in } C(\partial T).
\end{equation}
From this we obtain\eqref{eq:chietakexpan} (even with the next order terms). Clearly, for some $C>0$ we have $\|{\tilde\chi}^{\,\eta}_k\|_{L^\infty(\partial T)} \le C$. 

\smallskip

To prove \eqref{eq:chietakH1bdd}, via integration by parts we have
\begin{equation*}
\|\nabla {\tilde\chi}^{\,\eta}_k\|_{L^2(\frac1\eta\bT^d\setminus \ol T)}^2 = -\int_{\partial T} \frac{\partial {\tilde\chi}^{\,\eta}_k}{\partial N} {\tilde\chi}^{\,\eta}_k = \int_{\partial T} N^k {\tilde\chi}^{\,\eta}_k. 
\end{equation*}
Note the repeated $k$ above is not summed. Recall that ${\tilde\chi}^{\,\eta}_k = \cS^\eta[\phi_{k,\eta}]$ is defined everywhere in $\frac1\eta \bT^d$ and is harmonic also in $T$. Using the jump relation \eqref{eq:SDtrace}, we also have
\begin{equation*}
\|\nabla {\tilde\chi}^{\,\eta}_k\|_{L^2(T)}^2 = \int_{\partial T} \frac{\partial {\tilde\chi}^{\,\eta}_k}{\partial N}\big\vert_- {\tilde\chi}^{\,\eta}_k = \int_{\partial T} (-N^k-\phi_{k,\eta}) {\tilde\chi}^{\,\eta}_k.
\end{equation*}
Here, $\frac{\partial}{\partial N}\rvert_-$ means that the derivative is taken as a limit from the inside of $T$. The above quantities are bounded uniformly in $\eta$, and so is $\|\nabla {\tilde\chi}^{\,\eta}_k\|_{L^2(\frac1\eta\bT^d)}$. The estimate for $\|{\tilde\chi}^{\,\eta}_k\|_{L^2(\frac1\eta\bT^d)}$ then follows from the Poincar\'e-Whirtinger inequality since ${\tilde\chi}^{\,\eta}_k$, defined by the periodic single-layer potential, is mean zero in the torus. 

\smallskip

To prove the estimates \eqref{eq:chietakbdd} we verify \eqref{eq:chietakbdd1}. Combining the bound of $\|{\tilde\chi}^{\,\eta}_k\|_{L^\infty(\partial T)}$ with the maximum principle, we have $\|{\tilde\chi}^{\,\eta}_k\|_{L^\infty(\frac1\eta\bT^d\setminus \ol T)} \le C$. As for the gradient estimate, the explicit formula ${\tilde\chi}^{\,\eta}_k = \cS^\eta_\fp[\phi_{k,\eta}]$ and the uniform bound of $\phi_{k,\eta}$ lead to, for some universal constant $C$,
\begin{equation}
\left|\nabla {\tilde\chi}^{\,\eta}_k(z)\right| \le C, \quad z \in \textstyle\frac1\eta \bT^d\setminus B_1.
\end{equation}
Apply then the elliptic regularity theory for ${\tilde\chi}^{\,\eta}_k$ in $B_2\setminus \ol T$, we can also bound $|\nabla {\tilde\chi}^{\,\eta}_k|$ there and hence obtain the gradient estimate in \eqref{eq:chietakbdd1}. This completes the proof.
\end{proof}

\begin{remark}\label{rem:chirescale} In view of the relation $\chi_{k,\eta}(y) = \eta{\tilde\chi}^{\,\eta}_k(\frac{y}{\eta})$, and by rescaling, we also have
\begin{equation}
\label{eq:rschi1}
\|\nabla \chi_{k,\eta}\|_{L^2(\bT^d)} + \|\chi_{j,\eta}\|_{L^2(\bT^d)} \le C\eta^{\frac d2}.
\end{equation}
Moreover, $x \mapsto \chi_{k,\eta}(\frac{x}{\eps})$ defines an $\eps$-periodic function in $\R^d$, and by rescaling, in each $\eps$-cube (say, in $Q_\eps$), we have
\begin{equation*}
\|\nabla \chi_{k,\eta}(\textstyle\frac{\cdot}{\eps})\|_{L^2(Q_\eps)} + \|\chi_{j,\eta}(\textstyle\frac{
\cdot}{\eps})\|_{L^2(Q_\eps)} \le C(\eps \eta)^{\frac d2}.
\end{equation*}
Note that the above estimate of $\|\chi_{k,\eta}\|_{L^2(Q_\eps)}$ is better than what one gets using only $\|\chi_{k,\eta}\|_{L^\infty} \le C\eta$.
\end{remark}

\section{Proofs of the main results}

In the following two subsections, we prove Theorem \ref{thm:Aeta} and Theorem \ref{thm:uetarate} respectively.

\subsection{The expansion of the effective coefficients}\label{sec:Aexpan}
Recall the definitions of the cell problem \eqref{eq:cellp} and the rescaled cell problems \eqref{eq:rcellp}; their solutions are denoted, respectively, by $\chi_{k,\eta}$ and ${\tilde\chi}^{\,\eta}_k$. Given the informations of those functions in the previous section, the proof of Theorem \ref{thm:Aeta} becomes a straightforward computation.

Through integration by parts and rescaling, we rewrite the formula \eqref{eq:aijeta} as
\begin{equation*}
\begin{aligned}
\ol a_{ij}^\eta &= \delta_{ij} + \fint_{\bT^d \setminus \eta T} \partial_i \chi_{j,\eta}(y) dy = \delta_{ij} - \frac{1}{1-\eta^d|T|} \int_{\partial (\eta T)} N^i \chi_{j,\eta}(y) d y\\
&= \delta_{ij} - \frac{\eta^{d}}{1-\eta^d|T|} \int_{\partial T} N^i {\tilde\chi}^{\,\eta}_{j}(z) d z.
\end{aligned}
\end{equation*}
Plug in the expansion formula \eqref{eq:rchietakexp} of ${\tilde\chi}^{\,\eta}_k$, we obtain
\begin{equation}
\label{eq:abarijexpan}
\ol a_{ij}^\eta = \delta_{ij} - \eta^d \int_{\partial T} N^i w^0_k + O(\eta^{2d}).
\end{equation}
In particular, $|\ol a_{ij}^\eta - \delta_{ij}| \le C\eta^d$. Since $w^0_k$ is the unique solution to the exterior problem \eqref{eq:extnp}, we use Green's identities and check that
\begin{equation*}
-\int_{\partial T} N^i w^0_k = \int_{\partial T} \frac{\partial w^0_i}{\partial N} w^0_k = \int_{\R^d\setminus \ol T} -\nabla \cdot(w^0_k \nabla w^0_i ) = -\int_{\R^d\setminus \ol T} \nabla w^0_i \cdot \nabla w^0_k.
\end{equation*}
For the second equality, we used the fact that as $r \to \infty$,  $w^0_k \rvert_{\partial B_r} \to 0$ and $\frac{\partial w^0_i}{\partial N} \rvert_{\partial B_r}$ is of order $O(r^{-(d-1)})$. The desired formula is hence proved. 
\medskip

\subsection{Proof of the convergence rates in the homogenization}\label{sec:uetarate}

We prove Theorem \ref{thm:uetarate}. Let $u^\eps$, $\ol u^\eta$ and $u$ be the unique solutions to the problems \eqref{eq:hetnp}, \eqref{eq:hometa} and \eqref{eq:homnp} respectively. We consider the discrepancy function
\begin{equation}
\label{eq:zetadef}
\zeta^\eps = u^\eps - \ol u^\eta - \eps \chi_{k,\eta}(\frac{x}{\eps})\partial_k \ol u^\eta(x).
\end{equation}
Direct computation shows that, for $x\in \Omega^\eps$,
\begin{equation*}
\nabla \zeta^\eps = \nabla u^\eps - \nabla \ol u_\eta - (\nabla \chi_{k,\eta})(\frac{x}{\eps}) \partial_k \ol u_\eta(x) - \eps\chi_{k,\eta}(\frac{x}{\eps})\nabla \partial_k \ol u^\eta,
\end{equation*}
and
\begin{equation}
\label{eq:Dzeta1}
-\Delta \zeta^\eps = -\partial_i \left(\{\ol a_{ij}^\eta - \delta_{ij} - (\partial_i \chi_{j,\eta})(\frac{x}{\eps}) \} \partial_j \ol u^\eta \right) + \eps \partial_i \left(\chi_{j,\eta}(\frac{x}{\eps}) \partial^2_{ij} \ol u^\eta\right).
\end{equation}
Following the standard technique as carried in \cite{BLP,KLS12_ARMA}, we can represent the first term on the right in a better form as follows. Let $\theta(\eta) = \eta^d|T|$ be the volume fraction of the hole in $\bT^d\setminus (\eta \ol T)$; let $\mathbf{1}_{\bT^d\setminus \eta\overline T}$ denote the indicator function in $\bT^d$ of $\bT^d\setminus \eta \ol T$. For each fixed $j =1,\dots,d$, define the vector field
\begin{equation*}
F_j = (F_j^i), \;\quad \text{with} \;\quad F_j^i := (1-\theta(\eta)) \ol a_{ij}^\eta - \left\{\delta_{ij} + (\partial_i \chi_j)(y)\right\} \mathbf 1_{\mathbb T^d\setminus \eta\bar T},  \quad y \in \bT^d.
\end{equation*}
We compute the distributional derivative $\partial_k F^i_j$. Take a test function $\varphi \in C^\infty(\bT^d)$; we compute
\begin{equation*}
\begin{aligned}
\int_{\bT^d} -(\partial_k \varphi)F^i_j &= -\int_{\bT^d\setminus \eta\ol T} \partial_k\left(\varphi \{\delta_{ij} + \partial_i \chi_j\}\right) + \int_{\bT^d\setminus \eta\ol T} \varphi \partial_k \partial_i \chi_j\\
&= \int_{\partial T} N^k \varphi(\delta_{ij} + \partial_i \chi_j) + \int_{\bT^d\setminus \eta\ol T} \varphi \partial_k \partial_i \chi_j.
\end{aligned}
\end{equation*}
The distributional derivatives of $F_j$ can be computed easily, and due to the boundary conditions in \eqref{eq:cellp}, we also have
\begin{equation*}
\div\, F_j = \partial_\ell F^\ell_j = 0 \qquad \text{in } \bT^d.
\end{equation*}

\begin{lemma}
There exists a tensor field $\Psi = (\Psi_{kij})$, $k,i,j\in \{1,\dots,d\}$, in the unit torus $\bT^d$ and a universal constant $C>0$, such that for all $k,i,j$,
\begin{equation}
\label{eq:Psiprop}
\Psi_{kij} = - \Psi_{ikj}, \quad (F_j)^i = \partial_\ell (\Psi_{\ell i j}), \quad \|\Psi_{kij}\|_{C(\bT^d)} \le C, \; \text{and} \; \|\Psi_{kij}\|_{L^2(\bT^d)} \le C\eta^{\frac d2}.
\end{equation}
\end{lemma}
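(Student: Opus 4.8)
The plan is to build $\Psi$ as a \emph{flux corrector} by solving a vector Poisson equation on the torus, exactly as in classical periodic homogenization (cf.\ \cite{BLP,KLS12_ARMA}), and then to extract the two estimates from elliptic regularity together with the smallness of $\chi_{j,\eta}$ established in Section~\ref{sec:rcellp}. First I would record the two structural facts about $F_j=(F_j^i)$. It is divergence free, $\div F_j=0$ in $\bT^d$, which is already observed just before the lemma. It also has zero mean on $\bT^d$: by the definition \eqref{eq:aijeta} of $\ol a_{ij}^\eta$ one has $\int_{\bT^d\setminus \eta\ol T}(\delta_{ij}+\partial_i\chi_{j,\eta})=(1-\theta(\eta))\ol a_{ij}^\eta$, and hence $\int_{\bT^d}F_j^i=(1-\theta(\eta))\ol a_{ij}^\eta-(1-\theta(\eta))\ol a_{ij}^\eta=0$. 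Consequently, for each pair $i,j$ the periodic problem
\[
-\Delta v_j^i=F_j^i \ \text{ in }\bT^d, \qquad \fint_{\bT^d} v_j^i=0,
\]
has a unique weak solution $v_j^i\in H^1(\bT^d)$.

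Next I would set $\Psi_{\ell i j}:=\partial_i v_j^\ell-\partial_\ell v_j^i$, which is antisymmetric in $(\ell,i)$ by construction. Since $-\Delta(\div v_j)=\div(-\Delta v_j)=\div F_j=0$, the function $\div v_j$ is harmonic on $\bT^d$, hence constant, and of zero mean, hence $\equiv 0$; therefore
\[
\partial_\ell\Psi_{\ell i j}=\partial_i(\div v_j)-\Delta v_j^i=0+F_j^i=(F_j)^i,
\]
which is the second identity in \eqref{eq:Psiprop}.

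It then remains to estimate $F_j^i$ and feed this into elliptic regularity. I would decompose
\[
F_j^i=\big[(1-\theta(\eta))\ol a_{ij}^\eta-\delta_{ij}\big]+\delta_{ij}\mathbf 1_{\eta\ol T}-(\partial_i\chi_{j,\eta})\mathbf 1_{\bT^d\setminus \eta\ol T}.
\]
The first (constant) term is $O(\eta^d)$ by Theorem~\ref{thm:Aeta} together with $\theta(\eta)=O(\eta^d)$; the middle term has $L^2(\bT^d)$ norm $|\eta T|^{1/2}=O(\eta^{d/2})$; and by Remark~\ref{rem:chirescale} one has $\|\nabla\chi_{j,\eta}\|_{L^2(\bT^d)}\le C\eta^{d/2}$, while $\|\nabla\chi_{j,\eta}\|_{L^\infty}\le C$ by Lemma~\ref{lem:cprcp}. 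Hence $\|F_j^i\|_{L^\infty(\bT^d)}\le C$ uniformly in $\eta$ and $\|F_j^i\|_{L^2(\bT^d)}\le C\eta^{d/2}$. Calder\'on--Zygmund estimates on the torus give $\|v_j^i\|_{W^{2,p}(\bT^d)}\le C_p\|F_j^i\|_{L^p(\bT^d)}$ for every $p\in(1,\infty)$; choosing $p>d$ and using $W^{2,p}(\bT^d)\hookrightarrow C^{1}(\bT^d)$ yields $\|\nabla v_j^i\|_{C(\bT^d)}\le C\|F_j^i\|_{L^\infty}\le C$, while the case $p=2$ (equivalently, a one-line Fourier-series computation, the nonzero dual-lattice frequencies being bounded away from $0$) gives $\|\nabla v_j^i\|_{L^2(\bT^d)}\le C\|F_j^i\|_{L^2}\le C\eta^{d/2}$. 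Both bounds pass to $\Psi_{\ell i j}=\partial_i v_j^\ell-\partial_\ell v_j^i$, which finishes the proof.

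The construction itself is entirely standard, so the only step needing care is the bookkeeping in the $L^2$ bound for $F_j^i$: one must observe that every localized or discontinuous piece of $F_j^i$—the indicator $\mathbf 1_{\eta\ol T}$ and the corrector gradient $\nabla\chi_{j,\eta}$—contributes only at the scale $\eta^{d/2}$ in $L^2$, whereas the bulk constant part is the even smaller $O(\eta^d)$. This is exactly where the refined estimate $\|\nabla\chi_{j,\eta}\|_{L^2(\bT^d)}\le C\eta^{d/2}$ from Remark~\ref{rem:chirescale}, rather than the crude bound $\|\nabla\chi_{j,\eta}\|_{L^\infty}\le C$, is indispensable; using the $L^\infty$ bound would only produce $\|\Psi_{kij}\|_{L^2}\le C$, which is insufficient for the later convergence-rate arguments.
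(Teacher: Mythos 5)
Your proof is correct and follows essentially the same route as the paper: solve a periodic Poisson problem for each component of $F_j$, antisymmetrize the gradient to build $\Psi$, use $\div F_j=0$ to recover $(F_j)^i=\partial_\ell\Psi_{\ell ij}$, and derive the two bounds from $\|F_j\|_{L^\infty}\le C$ and the decomposition giving $\|F_j\|_{L^2(\bT^d)}\le C\eta^{d/2}$ via Remark~\ref{rem:chirescale}. The only differences are cosmetic: you verify the mean-zero property of $F_j$ and spell out the $W^{2,p}$/Sobolev-embedding step that the paper leaves implicit.
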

\begin{proof} Note that $\Psi$ should depend on $\eta$ but we hide it from the notations. 
For each $j$, it is clear that $\int_{\bT^d} F_j = 0$; the estimate \eqref{eq:chietakbdd} shows $F_j \in L^\infty(\bT^d)$ and, for some universal constant $C$, $\|F_j\|_{L^\infty} \le C$. 
Following the standard technique (e.g.\,\cite[Prop.\,3.1]{MR4249626}) we can find $\Psi_{kij}$ by solving
\begin{equation*}
\Delta \psi_{ij} = F^i_j \qquad \text{in } \bT^d \quad \text{with} \;\; \int_{\bT^d} \psi_{ij} = 0,
\end{equation*}
and then setting
\begin{equation*}
\Psi_{kij} = \partial_k \psi_{ij} - \partial_i \psi_{kj}.
\end{equation*}
The first identity in \eqref{eq:Psiprop} follows immediately, the second holds true since $\div F_j = 0$. The $L^\infty$ bound of $\Psi$ follows from that of $F_j$. Finally, we note that
\begin{equation*}
|F^i_j|(x) \le \begin{cases} C, &\quad \text{in } \eta \ol T,\\
C\eta^d + \partial_i \chi_j(y), &\quad \text{in } \bT^d\setminus \eta \ol T,
\end{cases}
\end{equation*}
which leads to
\begin{equation*}
\|F^i_j\|^2_{L^2(\bT^d)} \le C\eta^d + \int_{\bT^d\setminus \eta \ol T} |\partial_i \chi_{j,\eta}(y)|^2 = C\eta^d + \eta^d \int_{\frac1\eta\bT^d \setminus \ol T} |\partial_i {\tilde\chi}^{\,\eta}_j(z)|^2 dz \le C\eta^d.
\end{equation*}
Then we get
\begin{equation*}
\|\Psi_{kij}\|_{L^2(\bT^d)} \le C \max_{i,j} \|\nabla \psi_{ij}\|_{L^2(\bT^d)} \le C\max_{j} \|F_j\|_{L^2(\bT^d)} \le C\eta^{\frac d2}.
\end{equation*}
This completes the proof of the lemma.
\end{proof}

Now we derive the equations satisfied by the discrepancy function \eqref{eq:zetadef}. We claim that $\zeta^\eps$ is the (unique) solution to the following mixed boundary value problem.
\begin{equation}
\label{eq:zetambvp}
\left\{
\begin{aligned}
-\Delta \zeta^\eps &= \theta(\eta) f + \eps \partial_i \left(b_{kij}(\frac{x}{\eps}) \partial_k \partial_j \ol u^\eta\right) &\quad &\text{in } \Omega^\eps,\\
\frac{\partial \zeta^\eps}{\partial N} &= -\eps N^i b_{kij}(\frac{x}{\eps}) \partial_k \partial_j \ol u^\eta + \eps N^i \Psi_{kij}(\frac{x}{\eps}) \partial_j \partial_k \ol u^\eta &\quad &\text{in } \partial \Omega^\eps_{\rm int},\\
\zeta^\eps &= \eps \chi_{\ell, \eta}(\frac{x}{\eps}) \partial_\ell \ol u^\eta + (g-\ol u^\eta)\mathbf{1}_{\Omega \cap \partial \Oext} &\quad &\text{in } \partial \Omega^\eps_{\rm ext}.
\end{aligned}
\right.
\end{equation}
Note that $\Omega \cap \partial \Oext$ is precisely the union of the boundary of the holes that cut through $\partial \Omega$.

Firstly, using the fields $\Psi_{kij}$'s, we rewrite \eqref{eq:Dzeta1} and get
\begin{equation*}
\begin{aligned}
-\Delta \zeta^\eps &= \theta(\eta)f - \partial_i \left(F^i_j(\frac{x}{\eps}) \partial_j \ol u^\eta\right) + \eps \partial_i \left(\chi_\ell(\frac{x}{\eps}) \partial_i \partial_\ell \ol u^\eta\right)\\
&= \theta(\eta) f - \partial_i \left((\partial_k \Psi_{kij})(\frac{x}{\eps}) \partial_j \ol u^\eta\right) + \eps\partial_i\left(\chi_\ell(\frac{x}{\eps}) \partial_i \partial_\ell \ol u^\eta\right)\\
&= \theta(\eta) f-  \partial_i \left(\partial_k (\eps \Psi_{kij}(\frac{x}{\eps})) \partial_j \ol u^\eta\right) + \eps\partial_i\left(\chi_\ell(\frac{x}{\eps}) \partial_i \partial_\ell \ol u^\eta\right)\\
&= \theta(\eta) f - \partial_i \partial_k \left( \eps \Psi_{kij}(\frac{x}{\eps}) \partial_j \ol u^\eta\right) + \eps\partial_i\left(\chi_\ell(\frac{x}{\eps}) \partial_i \partial_\ell \ol u^\eta\right) + \eps\partial_i(\Psi_{kij}(\frac{x}{\eps}) \partial_k \partial_j \ol u^\eta).
\end{aligned}
\end{equation*}
In view of $\Psi_{kij} = -\Psi_{ikj}$, the second term on the right vanishes. We hence obtain the first line of \eqref{eq:zetambvp} with
\begin{equation}
\label{eq:bkijdef}
b_{kij} = \Psi_{kij} + \chi_{j,\eta} \delta_{ik}.
\end{equation}
Concerning boundary data, at the exterior boundary we check easily that the third line of \eqref{eq:zetambvp} holds;
at the interior boundary, direct computations using the boundary condition in \eqref{eq:cellp} lead to 
\begin{equation}
\label{eq:zetand}
\frac{\partial \zeta^\eps}{\partial N} = -\eps \chi_{j,\eta}(\frac{x}{\eps}) N^i \partial_i (\partial_j \ol u^\eta). 
\end{equation}
In view of \eqref{eq:bkijdef}, the above is equivalent to the second line of \eqref{eq:zetambvp}.

\medskip

We define some \emph{boundary layer} functions that are useful to deal with the mismatches between $u^\eps$ and $u$ (or $\ol u^\eta$) at $\partial \Oext$. Note that, by the modification of boundary holes in the geometric set-up in Section \ref{sec:geosetup},  in each boundary cell $Q_{k,\eps} = \eps(k+Q)$ with $k \in J_{\rm bdr}$, the enlarged hole $T^\eps_k$ contains $B_{\eps\eta}(\eps k)\cap \Omega$ and, meanwhile, is contained in the ball $B_{2\eps \eta}(\eps k)$. We can choose a smooth cut-off function $\psi^\eps_k$ with values in $[0,1]$ so that it is supported in $B_{4\eps\eta}(\eps k)$, and $\psi^\eps_k = 1$ in $B_{2\eps\eta}(\eps k)$ and $|\nabla \psi^\eps_k| \le C(\eps \eta)^{-1}$. Define
\begin{equation}
\label{eq:Phi1def}
\Phi_1(x) = \sum_{k \in J_{\rm bdr}} \psi^\eps_k(x) (g- u)(x),
\end{equation}
which is the superposition of a family of functions supported in mutually disjoint patches near $\partial \Omega$; the total number of patches is of order $O(\eps^{-d+1})$. We claim that, with $g, u \in C^{2}(\ol \Omega)$, $\Phi_1$ satisfies
\begin{equation}
\label{eq:Phi1H1}
\|\Phi_1\|_{H^1(\Omega)} \le C(\|g\|_{C^1} + \|u\|_{C^1})(\eps \eta)^{\frac d2}.
\end{equation}
To check this, we only need to estimate the $H^1$ norm of $\psi^\eps_k (g-u)$, for a fixed $k \in J_{\rm bdr}$, in the cube $B_{4\eps\eta}(\eps k)$ which contains the support of $\psi^\eps_k$. Note that
\begin{equation*}
\nabla (\psi^\eps_k (g- u)) = (\nabla \psi^\eps_k)(g - u) + \psi^\eps_k \nabla (g-u).
\end{equation*}
For each $x \in B_{4\eps\eta}(\eps k)$, let $x_0 = x_0(x)$ be a point on $\partial \Omega$ that is nearest to $x$. Clearly, $|x-x_0| \le 4\eps\eta$.  Since $u = g$ in $\partial \Omega$,  we have
\begin{equation*}
g(x) - u(x) = g(x) - g(x_0) - (u(x) -  u(x_0)).
\end{equation*}
Since $g,u \in C^2(\ol \Omega)$ and $|x-x_0| \le 4\eps\eta$, the quantity above is bounded by $(\|g\|_{C^1} + \|u\|_{C^1})\eps \eta$ uniformly in $x$, and hence
\begin{equation*}
\|g-u\|_{L^2(B_{4\eps \eta}(\eps k))} \le C(\|g\|_{C^1} + \|u\|_{C^1})(\eps \eta)^{\frac{d+2}{2}}.
\end{equation*}
Also by the uniform boundedness of $|\nabla(g-u)|$, we have
\begin{equation*}
\|\nabla (g-u)\|_{L^2(B_{4\eps \eta}(\eps k))} \le C(\|g\|_{C^1} + \|u\|_{C^1})(\eps \eta)^{\frac{d}{2}}.
\end{equation*}
Combining those computations above, we check that
\begin{equation*}
\|\psi^\eps_k (g- u)\|_{H^1(B_{4\eps \eta}(\eps k))} \le C(\eps \eta)^{-1} \|g - u\|_{L^2} + \|g-u\|_{L^2} + \|\nabla (g-u)\|_{L^2}.
\end{equation*}
Combining the estimates above, we check that \eqref{eq:Phi1H1} holds.

\medskip

For $t > 0$, let $U_t$ be the sub-domain $\{x \in \Omega \,:\, \mathrm{dist}(x,\partial \Omega) < t\}$. Note that $\partial \Oext$ is contained in $\ol U_{\eps}$. We can choose a smooth cut-off function $\psi^\eps$ with values in $[0,1]$ so that $\psi^\eps$ is supported in $\ol U_{4\eps}$ and $\psi^\eps = 1$ in $\ol U_{2\eps}$. Define
\begin{equation}
\label{eq:Phi2def}
\Phi_2(x) = \psi^\eps(x) (\eps \chi_{k,\eta}(\textstyle\frac{x}{\eps}) \partial_k u(x)), \quad x\in \ol \Omega.
\end{equation}
We claim that, for some universal constant $C > 0$,
\begin{equation}
\label{eq:Phi2H1}
\|\Phi_2 \|_{H^1(\Omega)} \le C\|u\|_{C^2}(\eps \eta^d)^{\frac 12}.
\end{equation}
We compute and check
\begin{equation*}
\nabla \Phi_2 = \eps (\nabla \psi^\eps) \chi_{k,\eta}(\textstyle\frac{x}{\eps}) \partial_k u + \psi^\eps (\nabla \chi_{k,\eta})(\textstyle \frac{x}{\eps}) \partial_k u + \eps \psi^\eps \chi_{k,\eta}(\textstyle\frac{x}{\eps}) \partial_k \nabla u.
\end{equation*}
Note that $U_{4\eps}$ is covered by a collection of $\eps$-cubes near $\partial \Omega$, and the total number is of order $O(\eps^{-d+1})$. Hence, in view of the estimates in Remark \ref{rem:chirescale}, we have
\begin{equation*}
\|\chi_{k,\eta}(\textstyle\frac{\cdot}{\eps})\|_{H^1(U_{4\eps})} \le \left[C \eps^{-d+1} (\eps \eta)^d \right]^{\frac12} \le C(\eps \eta^d)^{\frac 12}.
\end{equation*}
Then by the uniform bound of $\|u\|_{C^2(\Omega)}$ and the formula of $\nabla \Phi_2$ above, we get
\begin{equation*}
\|\Phi_2\|_{H^1} \le \sum_k C(\eps+1)\|u\|_{C^2}\|\chi_{k,\eta}\|_{H^1(U_{4\eps})}.
\end{equation*}
Combining those results above, we obtain \eqref{eq:Phi2H1}.

Clearly, if we replace $u$ by $\ol u^\eta$ in the definition of those boundary layer functions $\Phi_1$ and $\Phi_2$, the above results still hold. This is because, due to the uniform ellipticity of $\ol A(\eta)$ (for sufficiently small $\eta$), $\|\ol u^\eta\|_{C^2}$ can be bounded uniformly in $\eta$.

We observe that the problem \eqref{eq:zetambvp} is of the standard form \eqref{eq:mbvppd} with
\begin{equation*}
\begin{aligned}
f_0 &= \theta(\eta) f, \; &F &= (\eps b_{kij}(\frac{x}{\eps})\partial_k \partial_j \ol u^\eta)_i, \\
h &= \eps N^i \Psi_{kij}(\frac{x}{\eps})\partial_j \partial_k \ol u^\eta, \; &g_0 &= \Phi_1 + \Phi_2.
\end{aligned}
\end{equation*}
Invoking the energy estimate in Theorem \ref{thm:energyest}, the easy $L^2$ bound on $f_0$ and the estimates \eqref{eq:Phi1H1}-\eqref{eq:Phi2H1}, we have
\begin{equation*}
\|\nabla \zeta^\eps\|_{L^2(\Omega^\eps)} \le C\left\{ \eta^d + \sqrt{\eps} \eta^{\frac d2} + \|F\|_{L^2(\Omega^\eps)} + \kappa_{\eps,\eta}\|h\|_{L^2(\partial \Oint)}\right\}. 
\end{equation*}
In view of the uniform in $\eta$ estimate of $\|\ol u^\eta\|_{C^2(\Omega)}$, the periodicity of $b = (b_{kij})$ and its representation \eqref{eq:bkijdef}, the estimates \eqref{eq:Psiprop} and \eqref{eq:rschi1}, and by the usual rescaling, we have
\begin{equation*}
\|F\|_{L^2(\Omega^\eps)} \le C\eps \|b\|_{L^2(\bT^d)} \le C\eps\left\{\|\Psi\|_{L^2(\bT^d)} + \|\chi\|_{L^2(\bT^d)}\right\} \le C\eps \eta^{\frac d2}.
\end{equation*}
In a similar way, we also have
\begin{equation*}
\|h\|_{L^2(\partial \Oint)} \le C\sqrt{\eps} \|\Psi\|_{L^\infty} \|\ol u^\eta\|_{C^2} \|1\|_{L^2(\partial (\eta T))} \le C\left(\eps \eta^{d-1}\right)^{\frac12}.
\end{equation*}
Note that $\zeta^\eps - (\Phi_1 + \Phi_2)$ belongs to the functional space $V_\eps$ defined in \eqref{eq:Vepsdef}. Hence, by the Poincar\'e inequality \eqref{eq:pipd}, we get
\begin{equation*}
\|\zeta^\eps - (\Phi_1 + \Phi_2)\|_{H^1(\Omega^\eps)} \le \|\nabla \zeta^\eps\|_{L^2(\Omega^\eps)} + \|\nabla(\Phi_1+\Phi_2)\|_{L^2(\Omega)}.
\end{equation*}
We use \eqref{eq:Phi1H1} and \eqref{eq:Phi2H1} to control $\|\nabla(\Phi_1+\Phi_2)\|_{L^2}$ and invoke elliptic regularity theory to get 
$$
\|\ol u^\eta\|_{C^{2,\alpha}} \le C(\|f\|_{C^\alpha} + \|g\|_{C^{2,\alpha}}).
$$
Those estimates lead to 
\begin{equation*}
\begin{aligned}
\|\zeta^\eps\|_{H^1(\Omega^\eps)} &\le C\left\{\|\Phi_1+\Phi_2\|_{H^1(\Omega)} + \|\nabla \zeta^\eps\|_{L^2(\Omega^\eps)}\right\}\\
&\le C(\|f\|_{C^\alpha} + \|g\|_{C^{2,\alpha}})\{\eta^d + \sqrt{\eps}\eta^{\frac d2} + \eps \eta^{\frac d2} + \kappa_{\eps,\eta} (\eps \eta^{d-1})^{\frac12}\}.
\end{aligned}
\end{equation*}
Fix $d\ge 3$ first. From the definition \eqref{eq:kappadef}, we know that the last term on the right hand side is of order $O(\eps\eta^{\frac d2})$ if and only if $1/\sigma_\eps^2 \lesssim 1$, then the quantity above is of order $O(\sqrt{\eps}\eta^{\frac d2})$. When $1/\sigma_\eps^2 \gg 1$, then the last term is of order $O(\eta^{d-1})$ and we need to compare $\eta^{d-1}$ with $\sqrt{\eps}\eta^{\frac d2}$. We see that $O(\eta^{d-1})$ dominates if and only if $\eta^{d-2}/\eps \gg 1$, or equivalently, when $\eta^{d-2}/\sigma_\eps^2 \gg 1$ since $\eta^{d-1}/\sigma^2_\eps = (\eta^{d-2}/\eps)^2$. The case of $d=2$ can be checked  similarly. This concludes the proof of Theorem \ref{thm:uetarate}. 

\begin{remark}\label{rem:altmethod} We provide a simpler argument that proves \eqref{eq:uepsuH1}. For any $w\in V_\eps$, let $\tilde w = \mathcal{E} w \in H^1_0(\Omega)$ denote the extension of $w$ in $\Omega$ in Proposition \ref{prop:ext}. From equations of $u^\eps$ and $u$, we have
\begin{equation*}
\int_{\Omega^\eps} \nabla u^\eps \cdot \nabla w = \int_{\Omega^\eps} f\tilde w, \quad \text{and} \;\; \int_{\Omega} \nabla u \cdot \nabla \tilde w = \int_{\Omega} f\tilde w, \quad \forall w \in V_\eps.
\end{equation*}
Consider the discrepancy function $v^\eps = u^\eps - u$. By the identities above,  for all $w \in V_\eps$,
\begin{equation*}
\int_{\Omega^\eps} \nabla v^\eps \cdot \nabla w = \int_{\Omega^\eps} f\tilde w - \int_{\Omega^\eps} \nabla u \cdot \nabla \tilde w = \int_{\Omega\setminus \Omega^\eps}  \nabla u \cdot \nabla \tilde w - f\tilde w.
\end{equation*}
Note that $v^\eps \in H^1(\Omega^\eps)$ and its trace in $\partial \Oext$ agrees with $\Phi_1$ defined in \eqref{eq:Phi1def}, so setting $w = v^\eps - \Phi_1$ then $w \in V_\eps$. We then get
\begin{equation*}
\|\nabla w\|^2_{L^2(\Omega^\eps)} = -\int_{\Omega^\eps} \nabla \Phi_1 \cdot \nabla w + \int_{\Omega\setminus \Omega^\eps}  \nabla u \cdot \nabla \tilde w - f\tilde w.
\end{equation*}
Using the Poincar\'e inequality \eqref{eq:pipd} for $\tilde w$ and properties of the extension operators, and noting that $\|1\|_{L^2(\Omega\setminus \Omega^\eps)}$ is of order $O(\eta^{\frac d2})$, we get
\begin{equation*}
\|w\|_{H^1(\Omega^\eps)} \le C\|\nabla w\|_{L^2(\Omega^\eps)} \le C\{\|\nabla \Phi_1\|_{L^2} + \eta^{\frac d2}(\|\nabla u\|_{L^\infty} + \|f\|_{L^\infty})\}.
\end{equation*}
Since $v^\eps = w + \Phi_1$, we can replace $w$ on the left by $v^\eps$. Thanks to the estimate \eqref{eq:Phi1H1}, we obtain
\begin{equation*}
\|u^\eps - u\|_{H^1(\Omega^\eps)} \le C(1+\eps^{\frac d2})\eta^{\frac d2}.
\end{equation*}
This gives a short proof for the estimate \eqref{eq:uepsuH1}.
\end{remark}

\begin{remark}\label{rem:uetau} We briefly outline the proof of Corollary \ref{coro:uetau}. From \eqref{eq:hometa} we have
\begin{equation*}
\int_\Omega \ol A(\eta) \nabla (\ol u^\eta - u) \cdot \nabla (\ol u^\eta - u) = \int_\Omega f(\ol u^\eta - u) - \int_\Omega \nabla u \cdot (\ol u^\eta - u) + \int_\Omega (I - \ol A(\eta))\nabla u \cdot \nabla (\ol u^\eta - u).
\end{equation*}
The first two terms on the right cancel. In view of $|\ol A(\eta) - I|\le C\eta^d$ and that $\|\nabla u\|_{L^2}$ is bounded, we get \eqref{eq:uetauH1}. With more regular data we can control $u - \ol u^\eta$ in $C^{2,\alpha}(\ol \Omega)$. Indeed, the difference function $u - \ol u^\eta$ is of class $C^{2,\alpha}(\ol \Omega)$ and satisfies
\begin{equation*}
\left\{\begin{aligned}
&-\nabla \cdot (\ol A(\eta) \nabla (\ol u^\eta - u)) = h, &\quad \text{in } \Omega,\\
&\ol u^\eta - u = 0, &\quad \text{in } \partial \Omega,
\end{aligned}
\right.
\end{equation*}
where $h = \nabla \cdot ((\ol A(\eta) - I)\nabla u)$. In view of \eqref{eq:aijeta} we have $\|h\|_{C^\alpha(\Omega)} \le C\eta^d\|u\|_{C^{2,\alpha}}$. Then \eqref{eq:uetauC2} follows from the standard elliptic estimate.
\end{remark}

\section*{acknowledgements}
The author acknowledges the support from the NSF of China under Grants No.\,11871300. The author is also grateful for the suggestions and corrections made by the anonymous referee.

\appendix

\section{Useful tools for functions in perforated domains}

\subsection{Extension operators}

Let $\Omega^\eps$ as defined in Section \ref{sec:geosetup}. We use $\tilde \Omega^\eps$ to denote the domain obtained by filling the \emph{interior} holes. Note that the (modified) boundary holes are left untouched and we see $\partial \tilde \Omega^\eps = \partial \Oext$.

\begin{proposition}\label{prop:ext} There exists an extension operator $\mathcal{E}: H^1(\Omega^\eps) \to H^1(\tilde\Omega^\eps)$, and a universal constant $C > 0$ such that, for any $w \in H^1(\Omega^\eps)$, $\mathcal{E} w$ is in $H^1(\tilde\Omega^\eps)$ and satisfies
\begin{equation}
\label{eq:extbdd}
\begin{aligned}
&\mathcal{E} w = w \, \text{ in } \Omega^\eps, \\
&\|\mathcal{E} w\|_{L^2(\tilde\Omega^\eps)} \le C \|w\|_{L^2(\Omega^\eps)}, \qquad \|\nabla(\mathcal{E}w)\|_{L^2(\tilde\Omega^\eps)} \le C\|\nabla w\|_{L^2(\Omega^\eps)}.
\end{aligned}
\end{equation}
If $w$ is also in $V_\eps$, i.e.\,$w$ vanishing in $\partial \Oext$, then there exists an extension such that $\mathcal{E} w \in H^1_0(\Omega)$ and the above estimates hold with $\tilde\Omega^\eps$ replaced by $\Omega$. 
\end{proposition}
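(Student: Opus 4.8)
The plan is to follow the classical construction of Cioranescu and Saint Jean Paulin \cite{MR548785}: first build an extension operator on a single perforated reference cell with bounds that are uniform in the hole size $\eta$, then assemble the global operator by rescaling and periodic repetition over the interior cells, leaving the fluid part and the boundary cells untouched. The delicate point is the uniformity of the constants in both $\eps$ and $\eta$; everything else is bookkeeping with changes of variables.

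For the cell step I would fix $\rho\in(0,\tfrac13)$ with $\ol T\subset B_\rho$; by \textup{(A1)} the set $B_{1/3}\setminus\ol T$ is a connected bounded Lipschitz domain, so it carries a linear extension operator $P\colon H^1(B_{1/3}\setminus\ol T)\to H^1(B_{1/3})$, $Pv=v$ on $B_{1/3}\setminus\ol T$, that is bounded \emph{separately} in $L^2$ and in $H^1$ (a Stein, or a local $C^\infty$, extension works). To turn the $H^1$-bound into a bound by the gradient alone I would use the usual mean-subtraction: with $\langle v\rangle:=\fint_{B_{1/3}\setminus\ol T}v$, set $\hat Pv:=P(v-\langle v\rangle)+\langle v\rangle$; then the Poincar\'e--Wirtinger inequality on $B_{1/3}\setminus\ol T$ controls $\|\nabla \hat Pv\|_{L^2(B_{1/3})}$ by $\|\nabla v\|_{L^2(B_{1/3}\setminus\ol T)}$, while the $L^2$-boundedness of $P$ gives $\|\hat Pv\|_{L^2(B_{1/3})}\le C\|v\|_{L^2(B_{1/3}\setminus\ol T)}$, with $C=C(d,T)$. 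Given $\eta\in(0,1]$ and $w\in H^1(Q_1\setminus\eta\ol T)$, I would rescale: put $w_\eta(z):=w(\eta z)$ for $z\in B_{1/3}\setminus\ol T$ (well defined, since $\eta(B_{1/3}\setminus\ol T)=B_{\eta/3}\setminus\eta\ol T\subset Q_1\setminus\eta\ol T$), and define $\mathcal{E}_{\mathrm{cell}}w$ to equal $\hat Pw_\eta(\cdot/\eta)$ on $B_{\eta/3}$ and $w$ on $Q_1\setminus B_{\eta/3}$. Since $\hat Pw_\eta=w_\eta$ on $B_{1/3}\setminus\ol T$, the two pieces have matching traces on $\partial B_{\eta/3}$, so $\mathcal{E}_{\mathrm{cell}}w\in H^1(Q_1)$ and it agrees with $w$ on $Q_1\setminus\eta\ol T$; moreover in the integral estimates the scaling factors $\eta^{\pm d}$ (for $L^2$) and $\eta^{\pm(d-2)}$ (for the gradient) cancel exactly, leaving $\|\mathcal{E}_{\mathrm{cell}}w\|_{L^2(Q_1)}\le C\|w\|_{L^2(Q_1\setminus\eta\ol T)}$ and $\|\nabla\mathcal{E}_{\mathrm{cell}}w\|_{L^2(Q_1)}\le C\|\nabla w\|_{L^2(Q_1\setminus\eta\ol T)}$ with $C$ independent of $\eta$.

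To globalize, for $w\in H^1(\Omega^\eps)$ I would apply $\mathcal{E}_{\mathrm{cell}}$ in each interior cell $C_k:=\eps(k+Q_1)$, $k\in J_{\mathrm{int}}$, to $y\mapsto w(\eps k+\eps y)$ and rescale back, and set $\mathcal{E}w=w$ on the remaining part of $\Omega^\eps$ (the fluid part outside all holes together with the boundary cells). Because $\mathcal{E}_{\mathrm{cell}}$ leaves its argument unchanged outside the hole, $\mathcal{E}w$ differs from $w$ only inside the interior holes $\eps k+\eps\eta T$, which are compactly contained in the open cells $C_k$ and disjoint from $\partial\Omega$; hence the traces match across the cell faces, $\mathcal{E}w\in H^1(\tilde\Omega^\eps)$, and $\mathcal{E}w=w$ on $\Omega^\eps$. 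Summing the cell bounds over the pairwise disjoint $C_k$ and adding the trivial contribution of the untouched region yields \eqref{eq:extbdd} with a universal constant. Finally, if $w\in V_\eps$, then since the construction does not modify $w$ near $\partial\Oext=\partial\tilde\Omega^\eps$ the extension lies in $H^1_0(\tilde\Omega^\eps)$; extending it by zero to $\Omega\setminus\tilde\Omega^\eps$, which is legitimate because $\tilde\Omega^\eps$ is Lipschitz by \textup{(A3)}, produces the desired operator into $H^1_0(\Omega)$ with the same norms.

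The main obstacle — and really the only nontrivial ingredient — is the cell step: obtaining the $L^2$ estimate and the gradient estimate \emph{simultaneously} and with constants \emph{uniform in $\eta$}. This is what forces the two-part construction (a fixed-geometry extension bounded separately in $L^2$ and $H^1$, combined with mean-subtraction) rather than a single generic Sobolev extension, whose gradient bound would only come in terms of the full $H^1$ norm. The rescaling between $Q_1\setminus\eta\ol T$ and $B_{1/3}\setminus\ol T$ then transfers these bounds with no loss, precisely because the relevant integral functionals are scale invariant in the way recorded above.
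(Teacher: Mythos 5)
Your proposal is correct and follows essentially the same route as the paper's proof: a fixed reference extension near $T$ made gradient-stable by mean subtraction and Poincar\'e--Wirtinger, transferred to each interior hole by the scale invariance of the $L^2$--$L^2$ and gradient--gradient bounds, glued across cells, and completed for $w\in V_\eps$ by zero extension into the boundary holes. The only cosmetic difference is that you rescale in two stages (first to $Q_1\setminus\eta\ol T$, then by $\eps$) where the paper rescales by $\eps\eta$ in one step; the content is identical.
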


\begin{proof} Such extensions are classic; see e.g.\,\cite{MR548785,AGGJS,J-CMS,MR974289}. We recall the key ingredients. Without loss of generality we assume $T$ is simply connected subset of $B = B_1(0)$. Basic Sobolev spaces theory allows us to find an extension operator $P$ that maps $H^1(B\setminus \ol T)$ to $H^1(B)$ and a universal $C>0$ so that, for any $v \in H^1(B\setminus \ol T)$, $Pv \in H^1(B)$ satisfies
\begin{equation*}
\begin{aligned}
&Pv = v \; \text{ in } B \setminus \ol T,\\
&\|Pv\|_{L^2(B)} \le \|v\|_{L^2(B\setminus \ol T)}, \quad \|\nabla(Pv)\|_{L^2(B)} \le \|v\|_{H^1(B\setminus \ol T)}.
\end{aligned}
\end{equation*}
Then for $w \in H^1(B\setminus \ol T)$ above, we apply $P$ to the mean-zero function $w - \fint_{B\setminus \ol T} w$ and then add the mean term back; that is,
\begin{equation*}
\mathcal{E} w(y) = \textstyle\fint_{B\setminus \ol T} w + P\left(w- \fint_{B\setminus \ol T} w\right)(y), \qquad y\in B.
\end{equation*}
We check that $\mathcal{E} w \in H^1(B)$ and $\mathcal{E}w = w$ in $B\setminus \ol T$. In view of the inequalities
\begin{equation*}
\left\lvert\textstyle\fint_{B\setminus \ol T} w\right\rvert \le C\|w\|_{L^2(B_1\setminus \ol T)} \quad \text{and} \quad \|w-\textstyle\fint_{B\setminus \ol T} w\|_{L^2(B\setminus \ol T)} \le C\|\nabla w\|_{L^2(B\setminus \ol T)},
\end{equation*} 
we also have
\begin{equation*}
\|\mathcal{E} w\|_{L^2(B)} \le C\|w\|_{L^2(B\setminus \ol T)}, \quad \|\nabla(\mathcal{E} w)\|_{L^2(B)} \le C\|\nabla w\|_{L^2(B\setminus \ol T)}.
\end{equation*}
For each of the inequalities, the norms on both sides involve the same order of derivatives and integrations, and are hence scaling invariant. Therefore, given $w\in \Omega^\eps$, we can extend $w$ in each $\eps$-cell $\eps (k+ Q_1)$ by rescaling. More precisely, focusing inside $\eps (k + \eta B)$, we define
\begin{equation*}
\mathcal{E} w(y) = \left(\mathcal{E}\left[w(\eps k + \eps \eta \cdot)\right]\right)\left(\frac{\cdot - \eps k}{\eps\eta}\right), \qquad y \in \eps (k + \eta B).
\end{equation*}
Glue those extensions and keep the value of $w$ elsewhere; we get $\mathcal{E}$. The estimates \eqref{eq:extbdd} hold due to the aforementioned scaling invariance. In case $w \in V_\eps$, we set $w$ by zero in the boundary holes. The desired estimates hold clearly.
\end{proof}

As an application, those extension operators can be used to prove the following Poincar\'e type inequality.

\begin{corollary} There is a universal constant $C>0$ such that
\begin{equation}
\label{eq:pipd}
\|w\|_{L^2(\Omega^\eps)} \le C\|\nabla w\|_{L^2(\Omega^\eps)}, \qquad \forall w \in V_\eps.
\end{equation}
\end{corollary}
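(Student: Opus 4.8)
The plan is to transfer the inequality from the perforated domain to the fixed domain $\Omega$ by means of the extension operator of Proposition~\ref{prop:ext}. Given $w\in V_\eps$, the second part of that proposition provides an extension $\mathcal{E}w\in H^1_0(\Omega)$ with $\mathcal{E}w=w$ on $\Omega^\eps$ and $\|\nabla(\mathcal{E}w)\|_{L^2(\Omega)}\le C\|\nabla w\|_{L^2(\Omega^\eps)}$, where $C$ is universal; in particular it does not depend on $\eps$ or $\eta$.

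Since $\Omega$ is a fixed bounded domain and $\mathcal{E}w$ vanishes on $\partial\Omega$, the classical Poincar\'e inequality applies on $\Omega$: there is $C_\Omega>0$, depending only on $\Omega$, with $\|\mathcal{E}w\|_{L^2(\Omega)}\le C_\Omega\|\nabla(\mathcal{E}w)\|_{L^2(\Omega)}$. Combining this with the extension bound and using $\mathcal{E}w=w$ on $\Omega^\eps$,
\[
\|w\|_{L^2(\Omega^\eps)}\le\|\mathcal{E}w\|_{L^2(\Omega)}\le C_\Omega\|\nabla(\mathcal{E}w)\|_{L^2(\Omega)}\le C_\Omega C\,\|\nabla w\|_{L^2(\Omega^\eps)},
\]
which is the assertion with universal constant $C_\Omega C$.

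The only delicate point is that the resulting constant must be independent of the small parameters $\eps,\eta$, and this is precisely what the construction behind Proposition~\ref{prop:ext} guarantees: the local extension on $B_1\setminus\ol T$ obeys a scaling-invariant estimate, so rescaling to each $\eps$-cell $\eps(k+\eta B)$ and gluing yields a global extension with the same constant, while filling the boundary holes with zero (legitimate since $w\in V_\eps$) places $\mathcal{E}w$ in $H^1_0(\Omega)$ at no cost. Thus there is no genuine obstacle here; the substance of the argument has already been carried out in establishing the extension operator, and the corollary is an immediate consequence.
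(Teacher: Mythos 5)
Your proof is correct and is essentially identical to the paper's: both apply the extension operator of Proposition~\ref{prop:ext} to obtain $\mathcal{E}w\in H^1_0(\Omega)$, invoke the classical Poincar\'e inequality on the fixed domain $\Omega$, and conclude via the gradient bound in \eqref{eq:extbdd}. Your additional remark on why the constant is uniform in $\eps,\eta$ (scaling invariance of the local extension) correctly identifies where the real work lies.
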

\begin{proof} We apply the extension operator in Proposition \ref{prop:ext} to get $\mathcal{E} w\in H^1_0(\Omega)$, to which the usual Poincar\'e inequality can be applied. \eqref{eq:pipd} then follows from the last inequality of \eqref{eq:extbdd}.
\end{proof}

\begin{remark} We also refer to \cite{AllMur} for an alternative proof of \eqref{eq:pipd} which does not rely on the extension operators and hence applies to more general, namely macroscopically connected, holes. 
\end{remark}

\subsection{Trace type inequalities}

In the proof of the basic energy estimate in Section \ref{sec:energy}, we need the following trace type estimate, where the parameter $\kappa_{\eps,\eta}$ is defined in \eqref{eq:kappadef}. 

\begin{proposition}\label{prop:trace} Assume that the geometric set-up conditions {\upshape(A)} hold. There is a universal constant $C > 0$ so that, for all $\eps$ and $\eta$, the following holds:
\begin{equation}
\label{eq:trimbvp}
\|w\|_{L^2(\partial \Omega^\eps_{\rm int})} \le C \kappa_{\eps,\eta} \|w\|_{H^1(\Omega^\eps)}, \qquad \forall w \in H^1(\Omega^\eps).
\end{equation}
\end{proposition}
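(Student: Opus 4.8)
The plan is to reduce the global trace estimate to a single reference cell and then rescale. Fix a cell $k \in J_{\rm int}$, so the piece of $\partial\Omega^\eps_{\rm int}$ inside $Q_{k,\eps} = \eps(k+Q_1)$ is exactly $\eps k + \eps\eta\,\partial T$, and the relevant fluid region is $\eps k + \eps\,Y_{{\rm f},\eta}$. On a single unit cell, I would first establish a rescaled trace inequality: for $v\in H^1(Q_1\setminus\eta\ol T)$ one has
\begin{equation}
\label{eq:celltrace}
\|v\|_{L^2(\eta\partial T)}^2 \le C\Big( \lambda_\eta \|\nabla v\|_{L^2(Q_1\setminus\eta\ol T)}^2 + \eta^{d-1}\|v\|_{L^2(Q_1\setminus\eta\ol T)}^2 \Big),
\end{equation}
where $\lambda_\eta = \eta^{d-2}$ for $d\ge 3$ and $\lambda_\eta = \eta^{2}|\log\eta|$ for $d=2$ (one may in fact absorb this two-dimensional factor; the point is that $\lambda_\eta$ matches the square of the second entry in $\kappa_{\eps,\eta}$ after the $\eps$-rescaling). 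To prove \eqref{eq:celltrace} I would split $Q_1\setminus\eta\ol T$ into the annular region $B_{1/3}\setminus\eta\ol T$ near the hole and the rest; on the annulus, dilate by $1/\eta$ to land in $B_{1/(3\eta)}\setminus\ol T$, use the classical trace theorem $\|v\|_{L^2(\partial T)}\le C\|v\|_{H^1(B_2\setminus\ol T)}$, and then combine with the fact that the $L^2$-mass of $v$ on $B_2\setminus\ol T$ is controlled by the full $L^2$-mass on $B_{1/(3\eta)}\setminus\ol T$ plus (via a Poincar\'e/telescoping-of-dyadic-annuli argument) the Dirichlet energy times the volume factor that produces precisely $\eta^{d-2}$ for $d\ge 3$ and the logarithm for $d=2$. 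This dyadic-annulus bookkeeping — controlling $\|v-\text{mean}\|$ on $B_2$ by the gradient over the large ball with the sharp $\eta$-dependent constant — is exactly the estimate underlying $\sigma_\eps$ in \eqref{eq:sigepsdef}, and I would quote the corresponding lemma from \cite{Allaire91-2,MR4075336} if available rather than reprove it.

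Once \eqref{eq:celltrace} holds, I would rescale: given $w\in H^1(\Omega^\eps)$, apply \eqref{eq:celltrace} to $v(\cdot) = w(\eps k + \eps\,\cdot)$ on each $k\in J_{\rm int}$. The change of variables gives
\begin{equation*}
\|w\|_{L^2(\eps k + \eps\eta\partial T)}^2 \le C\Big( \eps\,\lambda_\eta\,\|\nabla w\|_{L^2(Q_{k,\eps}\cap\Omega^\eps)}^2 + \eps^{-1}\eta^{d-1}\|w\|_{L^2(Q_{k,\eps}\cap\Omega^\eps)}^2\Big),
\end{equation*}
where I used that the surface measure scales by $(\eps)^{d-1}$, the gradient integral by $\eps^{d-2}$, and the $L^2$ integral by $\eps^d$. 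Summing over $k\in J_{\rm int}$ and using that the cells $Q_{k,\eps}$ have bounded overlap (in fact are disjoint) yields
\begin{equation*}
\|w\|_{L^2(\partial\Omega^\eps_{\rm int})}^2 \le C\big( \eps\lambda_\eta + \eps^{-1}\eta^{d-1}\big)\|w\|_{H^1(\Omega^\eps)}^2 \le C\kappa_{\eps,\eta}^2\,\|w\|_{H^1(\Omega^\eps)}^2,
\end{equation*}
where the last step is just the observation that $\eps\lambda_\eta$ equals $\eps\eta^{d-2}$ (resp. $\eps\eta^2|\log\eta|$), which is comparable to $(\eps\eta)$ times a harmless factor and in any case is dominated by $\max\{\eps^{-1}\eta^{d-1},\eps\eta\}$ up to constants after noting $\eps\eta^{d-2}=(\eps\eta)\cdot\eta^{d-3}$... more precisely $\kappa_{\eps,\eta}^2=\max\{\eps^{-1}\eta^{d-1},\eps\eta\}$ for $d\ge3$ already absorbs $\eps\lambda_\eta$ since $\eps\eta^{d-2}\le \eps\eta$ for $d\ge3$, $\eta\le1$; a symmetric check handles $d=2$. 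Taking square roots gives \eqref{eq:trimbvp}.

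The main obstacle is the sharp single-cell estimate \eqref{eq:celltrace}, specifically pinning down the correct power of $\eta$ (and the logarithm in $d=2$) in front of the Dirichlet energy. This requires the telescoping estimate over dyadic annuli $\{2^j \le |z| \le 2^{j+1}\}$ for $1 \le |z| \le 1/(3\eta)$: the trace on $\partial T$ is controlled by the average of $v$ on a fixed annulus, which is reached from the outermost annulus by adding at most $\log_2(1/\eta)$ telescoping differences, each bounded via Poincar\'e on an annulus; the sum of the annular Poincar\'e constants weighted correctly is what produces $\sum_j 2^{j(2-d)} \sim \eta^{d-2}$ for $d\ge3$ (geometric, dominated by the smallest scale $j\sim\log(1/\eta)$) and $\sim|\log\eta|$ for $d=2$. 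I expect this to be the only place where genuine care is needed; everything else is scaling and summation. If the paper has already isolated this as a cited lemma (it points to \cite{MR974289} for the trace bound itself), I would simply invoke it and present only the rescaling-and-summation argument above.
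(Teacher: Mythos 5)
Your overall architecture --- a single-cell trace inequality followed by $\eps$-rescaling and summation over the cells --- is the right one; it is what underlies Lemma 2.1 of Conca--Donato \cite{MR974289}, which is all the paper itself offers as proof, and your rescaling-and-summation step is computed correctly. The genuine gap is in the key single-cell estimate: the constant $\lambda_\eta$ you put in front of the Dirichlet energy is wrong, and the inequality as you state it is \emph{false} for $d=2$ and $d\ge 4$. Test it on $v(y)=\varphi(|y|/\eta)$ with $\varphi(s)=1$ for $s\le \tfrac13$ and $\varphi(s)=0$ for $s\ge\tfrac23$: then $\|v\|^2_{L^2(\eta\partial T)}\sim\eta^{d-1}$, $\|\nabla v\|^2_{L^2(Q_1\setminus\eta\ol T)}\sim\eta^{d-2}$ and $\|v\|^2_{L^2(Q_1\setminus\eta\ol T)}\sim\eta^{d}$, so any admissible $\lambda_\eta$ must satisfy $\lambda_\eta\gtrsim\eta$; your $\eta^{d-2}$ (for $d\ge4$) and $\eta^{2}|\log\eta|$ (for $d=2$) are too small. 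The source of the error is the dyadic bookkeeping: for $d\ge3$ the telescoping sum $\sum_{j=0}^{J}2^{j(2-d)}$, $J\sim\log_2(1/\eta)$, is a convergent geometric series dominated by its \emph{first} term $j=0$ (the innermost annulus), hence it is $O(1)$; the quantity $\eta^{d-2}$ you quote is the single outermost term, not the sum. The correct single-cell inequality is
\begin{equation*}
\|v\|^2_{L^2(\eta\partial T)}\le C\left(\eta\,\mu_\eta\,\|\nabla v\|^2_{L^2(Q_1\setminus\eta\ol T)}+\eta^{d-1}\|v\|^2_{L^2(Q_1\setminus\eta\ol T)}\right),\qquad
\mu_\eta=\begin{cases}1,&d\ge3,\\ |\log\eta|,&d=2,\end{cases}
\end{equation*}
obtained by writing $\|v\|^2_{L^2(\eta\partial T)}=\eta^{d-1}\|\tilde v\|^2_{L^2(\partial T)}$ with $\tilde v(z)=v(\eta z)$, applying the classical trace theorem on $B_1\setminus\ol T$, and bounding $\|\tilde v\|^2_{L^2(B_1\setminus\ol T)}$ by $C\eta^{d}\|\tilde v\|^2_{L^2(\frac1\eta Q_1\setminus\ol T)}+C\mu_\eta\|\nabla \tilde v\|^2_{L^2(\frac1\eta Q_1\setminus\ol T)}$ via your dyadic argument with the sum evaluated correctly. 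Fortunately the corrected constant still closes the proof: after $\eps$-rescaling it yields $\eps\eta\mu_\eta+\eps^{-1}\eta^{d-1}\le 2\kappa_{\eps,\eta}^2$, and in fact identifies the two entries of \eqref{eq:kappadef} as exactly the gradient and $L^2$ contributions of the cell estimate; so the gap is repairable without changing your strategy.

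A secondary point you should address: for $k\in J_{\rm int}$ the geometric set-up only guarantees $\eps(k+B_{2\eta})\subset\Omega$, so the cube $Q_{k,\eps}$ need not lie in $\Omega$ and $v=w(\eps k+\eps\,\cdot)$ need not be defined on all of $Q_1\setminus\eta\ol T$. You cannot simply retreat to $B_{2\eta}(\eps k)$, since the $L^2$ term would then degrade from $\eps^{-1}\eta^{d-1}$ to $\eps^{-1}\eta^{-1}$. One fix is to first extend $w$ to an $H^1$ function on a full neighborhood of each interior cell with uniformly controlled norm (fill the interior holes using Proposition \ref{prop:ext}, then extend across the $C^{2,\alpha}$ boundary $\partial\Omega$) and apply the cell estimate to the extension.
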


We refer to Lemma 2.1 of Conca and Donato \cite{MR974289}. The proof there is based on a careful computation of the trace and volume integrals inside each $\eps$-period. The above estimates are much sharper than what one gets from a naive rescaling of the usual trace estimate, as the latter only yields a rough bound of order $O((\eps \eta)^{-\frac12})$.


\end{document}